\newtheorem{theorem}{Theorem}[section]
\newtheorem{proposition}[theorem]{Proposition}
\newtheorem{lemma}[theorem]{Lemma}
\newtheorem{corollary}[theorem]{Corollary}
\newtheorem{example}[theorem]{Example}
\theoremstyle{remark}
\newtheorem{remark}{Remark}
\date{\today}
\begin{document}

\title{Sums of $k$-potent matrices}

\author{Ivan Gargate}
\address{UTFPR, Campus Pato Branco, Rua Via do Conhecimento km 01, 85503-390 Pato Branco, PR, Brazil}
\email{ivangargate@utfpr.edu.br}

\author{Michael Gargate}
\address{UTFPR, Campus Pato Branco, Rua Via do Conhecimento km 01, 85503-390 Pato Branco, PR, Brazil}
\email{michaelgargate@utfpr.edu.br}

\begin{abstract}
We study sums of $k$-potent matrices. We show the conditions by which a complex matrix $A$ can be expressed as a sums of $k$-potent matrices. Also we obtain conditions by which a complex matrix $A$ can be expressed as a sum of finite order elements. This generalize some results obtain by Wu in \cite{Wu}. Also we study the sum of $k$-potent matrices in $\mathcal{M}_{\mathcal{C}f}(F)$ with $F$ be a field and proof that  any matrix in this space can be expressed as a sum of $14$ $(k+1)$-potent matrices preserving the result obtain by Slowik in \cite{SlowikIdemp}.
\end{abstract}


\keywords{k-potent matrices; upper triangular matrices; finite order; column-finite matrices }

\maketitle

\section{Introduction}\label{intro}
It is a classical question whether the elements of a ring or a group can be expressed as sums or products of elements of some particular set.

In the case of expressed matrices as a product, there are several authors who have works in this respect, for example see \cite{Halmos,Slowik-1,Zheng,Hou,Halmos2}.

Recently the authors in \cite{IvanMichael} gave a way to express matrices with product of commutators of finite order.

In the case of expressed matrices as product of idempotents we can see \cite{Wu}. In this context, there are various generalizations of concept of matrices idempotents, for instance see \cite{Mevlin}. In this paper, the authors explored the  $k$-potent matrices using the definition in \cite{Huang2} or the case $(1,k)$ in \cite{Mevlin} and generalize the work done by Wu and Slowik in \cite{Wu} and  \cite{SlowikIdemp} respectively  and show how to express matrices as sum of $k$-potent matrices.

Wu in \cite{Wu} shows a necessary and sufficient condition for a complex square matrix be the sum of finitely many idempotent matrices. In the section 2 the authors generalize this work proving the following result:

\begin{theorem}\label{th1}  A complex square matrix $A$ is a sum of finitely many $(k+1)$-potent if and only if there is an polynomial $F(x)$ with coefficient integers non-negatives and degree $\leq k-1$ such that $trA)=F(\omega)$ and $F(1)\geq rank(A)$.
\end{theorem}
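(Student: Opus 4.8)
\emph{Proof sketch.} For the forward implication I would argue directly, and for the converse I would reduce to building $A$ out of rank-one summands with prescribed traces, using a classical diagonal-prescription theorem.

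\textbf{Necessity.} Since $x^{k+1}-x=x\prod_{j=0}^{k-1}(x-\omega^{j})$ has simple roots, every $(k+1)$-potent matrix $M$ is diagonalizable with spectrum contained in $\{0,1,\omega,\dots,\omega^{k-1}\}$. Writing $m_j(M)$ for the multiplicity of the eigenvalue $\omega^{j}$, we get $\operatorname{tr}(M)=\sum_{j=0}^{k-1}m_j(M)\,\omega^{j}$ and $\operatorname{rank}(M)=\sum_{j=0}^{k-1}m_j(M)$. Hence, if $A=M_1+\cdots+M_p$ with each $M_i$ $(k+1)$-potent, then $\operatorname{tr}(A)=F(\omega)$ with $F(x):=\sum_{j=0}^{k-1}\big(\sum_{i=1}^{p}m_j(M_i)\big)x^{j}\in\mathbb{Z}_{\ge0}[x]$ of degree $\le k-1$, and $F(1)=\sum_{i=1}^{p}\operatorname{rank}(M_i)\ge\operatorname{rank}(A)$ by subadditivity of the rank.

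\textbf{Sufficiency.} Write $F(x)=\sum_{j=0}^{k-1}c_jx^{j}$, put $N:=F(1)=\sum_j c_j$ and $r:=\operatorname{rank}(A)$, so $N\ge r$. A rank-one matrix $uv^{T}$ satisfies $(uv^{T})^{k+1}=(v^{T}u)^{k}\,uv^{T}$, hence is $(k+1)$-potent whenever $\operatorname{tr}(uv^{T})=v^{T}u$ is a $k$-th root of unity; so it is enough to write $A$ as a sum of $N$ matrices of rank $\le1$ whose traces $\tau_1,\dots,\tau_N$ are prescribed so that $\omega^{j}$ occurs exactly $c_j$ times ($0\le j\le k-1$). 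Indeed $\sum_i\tau_i=F(\omega)=\operatorname{tr}(A)$, every $\tau_i$ is a nonzero root of unity (so each summand is genuinely of rank one), and each summand is then $(k+1)$-potent. To build such a decomposition I would choose a factorization $A=UV^{T}$ with $U,V$ of size $n\times N$, note that $W:=V^{T}U$ is $N\times N$ with $\operatorname{tr}(W)=\operatorname{tr}(A)=\sum_i\tau_i$, and that replacing $(U,V)$ by $(UG,\,V(G^{-1})^{T})$ for $G\in\mathrm{GL}_N(\mathbb{C})$ leaves $UV^{T}=A$ unchanged and conjugates $W$ to $G^{-1}WG$. If $W$ is not scalar, the classical theorem of Fillmore (a complex matrix that is not a scalar multiple of the identity is similar to a matrix with any prescribed main diagonal of the correct trace) yields $G$ with $(G^{-1}WG)_{ii}=\tau_i$ for all $i$; then $M_i:=\big((UG)e_i\big)\big((V(G^{-1})^{T})e_i\big)^{T}$ has rank $\le1$ and trace $\tau_i$, and $\sum_i M_i=A$, as desired.

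\textbf{Arranging $W$ non-scalar, and the obstacle.} It remains to pick the factorization so that $W$ is non-scalar. When $N>r$ this is easy: take $U=[\,U_0\mid 0\,]$ and $V=[\,V_0\mid V_1\,]$, where $A=U_0V_0^{T}$ with $U_0,V_0$ of size $n\times r$; then $W$ has its last $N-r$ columns zero, so $\operatorname{rank}(W)<N$ and $W$ is scalar only if $W=0$, which one avoids by choosing a column of $V_1$ not orthogonal to the columns of $U_0$. To reduce to $N>r$ one uses: for $k\ge2$, $1+\omega+\cdots+\omega^{k-1}=0$, so $F$ may be replaced by $F(x)+(1+x+\cdots+x^{k-1})$, which preserves all required properties of $F$ and raises $N$ by $k$; for $k=1$, $\omega=1$, $F$ is constant, $N=r$, and the only scalar matrix $W$ with $\operatorname{tr}(W)=\operatorname{tr}(A)=N=r$ is $I_r$, whose diagonal already equals $(1,\dots,1)=(\tau_1,\dots,\tau_N)$ — which is how one recovers Wu's theorem \cite{Wu}. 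The step I expect to be the main obstacle is exactly the production of a factorization with $V^{T}U$ non-scalar in the degenerate configurations: nilpotent $A$ with $A^{2}=0$ (where every factorization $A=U_0V_0^{T}$ with $U_0,V_0$ of size $n\times r$ forces $V_0^{T}U_0=0$), the case $A=0$, and the small cases $n\le1$; the remaining points (availability of Fillmore's theorem and the legitimacy of enlarging $F$) are routine.
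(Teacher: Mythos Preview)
Your necessity argument coincides with the paper's (which packages it as Lemma~\ref{lemapoli}). For sufficiency, both you and the paper invoke Fillmore's diagonal-prescription theorem, but at different places. The paper first passes to a similar matrix $\left[\begin{smallmatrix} A_1 & 0 \\ A_2 & 0 \end{smallmatrix}\right]$ with $A_1$ of size $r\times r$, applies Fillmore to $A_1$ so that its diagonal entries become positive-integer multiples $a_j^{(i)}\omega^{j}$ of roots of unity, and then splits each of the $r$ nonzero columns of the resulting matrix into $a_j^{(i)}$ copies of a rank-one matrix of the explicit ``one nonzero column with a root of unity on the diagonal'' shape of Lemma~\ref{lema4}. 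You instead factor $A=UV^{T}$ through $\mathbb{C}^{N}$ with $N=F(1)$, apply Fillmore to the $N\times N$ matrix $W=V^{T}U$ so that its diagonal consists of single roots of unity, and read off the $N$ rank-one $(k+1)$-potent summands directly from the adjusted columns of $U$ and $V$. Your route is a bit more streamlined---it produces all the summands in one pass and replaces the paper's scalar/non-scalar case split on $A_1$ by the freedom, for $k\ge 2$, to enlarge $F$ by $1+x+\cdots+x^{k-1}$ until $N>r$---while the paper's route stays closer to Wu's original argument and exhibits the summands in the concrete form reused later in the paper. The degenerate configurations you flag ($A=0$, $n\le 1$) are immediate, and your padding construction with $V_1$ chosen non-orthogonal to $U_0$ does handle the $A^{2}=0$ case, so the sketch is sound.
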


And in the geral case, we show the following:

\begin{theorem}\label{th4} Let $A$ be a complex square matrix and suppose that
\begin{itemize}
    \item [1.)] There are $\omega_1,\omega_2,\cdots,\omega_n$ and $\beta_1,\beta_2,\cdots,\beta_n$ integer numbers with  $\omega_i\neq \omega_j$ and $\beta_i\neq \beta_j$ for $i\neq j$ such that $\omega_i$ is a $\beta_ith $ root of unity for $i=1,2,\cdots,n$.
    \item[2.)] There are polynomials $F_1(x),F_2(x),\cdots, F_n(x)$ with positive coefficients integers and $1\leq degree(F_i) \leq \beta_i-1$, such that $$tr(A)=a_0+F_1(\omega_i)+F_2(\omega_2)+\cdots + F_n(\omega_n)$$
    where $a_0$ is a positive integer number.
    \end{itemize}
    Then, $A$ is a sum of finitely many $\beta_1+1,\beta_2+1,\cdots,\beta_n+1$-potent matrices if and only if $$a_0+F_1(1)+F_2(1)+\cdots+F_n(1)\geq rank(A).$$
 \end{theorem}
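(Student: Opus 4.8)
The plan is to follow the same two-part scheme as Theorem~\ref{th1}. I would first record the standing fact that an $n\times n$ complex matrix $M$ with $M^{\beta+1}=M$ has minimal polynomial dividing $x(x^{\beta}-1)$, hence is diagonalizable with spectrum in $\{0\}\cup\mu_{\beta}$, where $\mu_{\beta}=\{1,\omega,\dots,\omega^{\beta-1}\}$ is the group of $\beta$-th roots of unity; thus $\operatorname{tr}(M)=\sum_{\ell=0}^{\beta-1}m_{\ell}\omega^{\ell}$ and $\operatorname{rank}(M)=\sum_{\ell=0}^{\beta-1}m_{\ell}$, with $m_{\ell}\ge 0$ the multiplicity of $\omega^{\ell}$ as an eigenvalue of $M$. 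The point that shapes the statement is that $1=\omega_{i}^{0}$ lies in every $\mu_{\beta_{i}}$, so a single nonnegative integer $a_{0}$ (rather than one constant term per type) records the total multiplicity of the eigenvalue $1$ across all summands, which is also why one imposes $\deg F_{i}\ge 1$.

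For necessity I would take a decomposition $A=\sum_{j=1}^{m}M_{j}$ with $M_{j}$ being $(\beta_{t(j)}+1)$-potent, expand each $\operatorname{tr}(M_{j})=m_{j,0}+\sum_{\ell\ge 1}m_{j,\ell}\,\omega_{t(j)}^{\ell}$, and sum over $j$; collecting the constant terms into $a_{0}:=\sum_{j}m_{j,0}$ and the rest into polynomials $F_{i}$ of the required shape gives $\operatorname{tr}(A)=a_{0}+\sum_{i}F_{i}(\omega_{i})$, while $a_{0}+\sum_{i}F_{i}(1)=\sum_{j}\operatorname{rank}(M_{j})\ge\operatorname{rank}(A)$ by subadditivity of the rank. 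This yields the asserted inequality for the representation of $\operatorname{tr}(A)$ read off from the decomposition, and it is this representation that the converse construction consumes.

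For sufficiency, starting from a representation as in conditions 1--2 with $N:=a_{0}+\sum_{i}F_{i}(1)\ge r:=\operatorname{rank}(A)$ and $F_{i}(x)=\sum_{\ell}c_{i\ell}x^{\ell}$, I would first build the ``model'': the $N\times N$ diagonal matrix $D$ carrying $a_{0}$ ones and $c_{i\ell}$ copies of $\omega_{i}^{\ell}$, which is visibly the sum of its $N$ rank-one diagonal blocks, each a rank-one $(\beta_{i}+1)$-potent (the entries equal to $1$ giving rank-one idempotents), with $\operatorname{tr}(D)=\operatorname{tr}(A)$ and $\operatorname{rank}(D)=N$. Then I would transfer this to $A$ itself: using that being such a sum is similarity-invariant and that a non-scalar matrix is similar to one with any prescribed diagonal of the correct trace, in the principal range $N\le n$ I would conjugate $A$ to a matrix $B$ whose diagonal is the list of the $N$ roots of unity of $D$ padded with zeros and whose columns in the zero-diagonal positions vanish (possible since $r\le N$). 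Writing $B=\sum_{\ell}(Be_{\ell})e_{\ell}^{\top}$ then displays $A$ (up to similarity) as a sum of rank-one matrices $P_{\ell}$ with $P_{\ell}^{2}=d_{\ell}P_{\ell}$, $d_{\ell}$ the $\ell$-th diagonal entry; hence $P_{\ell}$ is $(\beta_{i}+1)$-potent when $d_{\ell}\in\mu_{\beta_{i}}$ and $P_{\ell}=0$ when $d_{\ell}=0$, as needed. The remaining ranges and degeneracies---$N>n$ (more potents than the ambient size), $A$ or a relevant invariant block scalar, and small $n$---I would handle as in the one-type case of \cite{Wu} and \cite{SlowikIdemp}: by first splitting off full-rank potents such as $I_{n}$ or $\omega_{i}I_{n}$ while keeping the rank budget balanced, and by reducing to block-diagonal copies of low-dimensional instances.

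The main obstacle is exactly this sufficiency construction. The necessity half is a one-line rank inequality, but in the construction the normal-form step and the column-by-column peeling must be organized simultaneously for the several root-of-unity orders $\beta_{i}$, and the degenerate cases have to be carried through without spending any of the slack $N-r$, since the inequality $a_{0}+\sum_{i}F_{i}(1)\ge\operatorname{rank}(A)$ is sharp. Making the bookkeeping of traces and ranks close up in those corner cases is where one must lean on the machinery already developed for Theorem~\ref{th1}.
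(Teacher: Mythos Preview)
Your approach is correct and rests on the same ingredients as the paper's proof: subadditivity of rank for necessity, and Fillmore's prescribed-diagonal theorem together with the rank-one column-peeling of Lemma~\ref{lema4} for sufficiency. The paper organizes the sufficiency step a bit differently and more economically. Instead of seeking a normal form with $N=a_0+\sum_i F_i(1)$ single-root diagonal entries (one column per potent summand), it stays inside the $r\times r$ block $A_1$ coming from the decomposition $\operatorname{ran}(A^{*})\oplus\ker(A)$, partitions the $r$ diagonal slots as $r=r_0+r_1+\cdots+r_n$ with $r_0\le a_0$ and $r_i\le F_i(1)$, uses Fillmore on $A_1$ so that the resulting matrix $B$ has diagonal blocks $B_0,\dots,B_n$ with $\operatorname{tr}(B_0)=a_0$ and $\operatorname{tr}(B_i)=F_i(\omega_i)$, splits $B$ into its block columns, and then simply invokes Theorem~\ref{th1} on each block. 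This buys two things over your route: it recycles Theorem~\ref{th1} wholesale rather than re-running the column argument from scratch, and since one always has $r\le n$ it completely sidesteps the extra case $N>n$ that you are forced to treat separately. Your direct construction is fine once you justify your normal form by first passing to an $(n-N)$-dimensional subspace of $\ker A$ and then applying Fillmore to the remaining $N\times N$ block, but the paper's block reduction is the cleaner packaging.
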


And we enunciated some remarks about linear combinations of these special matrices.

In the section 3, the authors generalize the results of Slowik in \cite{SlowikIdemp} for  the case of $k$-potent matrices on $\mathcal{M}_{\mathcal{C}f}(F)$, proving the following result:

\begin{theorem}\label{th2}
Let $F$ any field. Any matrix $A \in\mathcal{M}_{\mathcal{C}f}(F),$  can be expressed as a sum of at most 14,   $(k+1)$-potent  matrices from $\mathcal{M}_{\mathcal{C}f}(F).$
\end{theorem}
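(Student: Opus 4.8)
The plan is to adapt Slowik's strategy for idempotents in $\mathcal{M}_{\mathcal{C}f}(F)$ to the $(k+1)$-potent setting, using the extra algebraic freedom that $(k+1)$-potent matrices provide over idempotents. First I would recall that an idempotent is $2$-potent, so a $(k+1)$-potent matrix $E$ satisfies $E^{k+1}=E$; in particular every idempotent is $(k+1)$-potent (since $E^2=E$ forces $E^{k+1}=E$), which means Slowik's decomposition already gives an upper bound for free. So the real content is to show that the bound $14$ can be preserved — i.e., we do not need more than $14$ summands — and that the summands can be taken to genuinely lie in $\mathcal{M}_{\mathcal{C}f}(F)$. I would therefore open by stating precisely what ``$\mathcal{M}_{\mathcal{C}f}(F)$'' means (column-finite matrices over $F$, indexed by $\mathbb{N}$ or an arbitrary infinite set), and quote Slowik's theorem from \cite{SlowikIdemp} that every $A\in\mathcal{M}_{\mathcal{C}f}(F)$ is a sum of at most $14$ idempotents.

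The key steps, in order, would be: (1) Reduce to a normal form. Using column-finiteness, decompose $A$ as a (possibly infinite) block-diagonal or block-triangular matrix whose blocks are finite — this is the standard device in \cite{SlowikIdemp}: a column-finite matrix can be conjugated, or directly written, so that it is a direct sum of finite matrices together with a controlled ``shift-like'' part. (2) Handle the finite blocks. On each finite block apply (a finite-dimensional version of) the results of Section 2 — in particular Theorem \ref{th1} — noting that $k$-potent matrices are far more flexible than idempotents, so a single finite block needs very few $(k+1)$-potent summands; here one checks that traces and ranks impose no obstruction because over a general finite block one has enough room, or one simply invokes that idempotents suffice and idempotents are $(k+1)$-potent. (3) Handle the shift/infinite part. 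This is where the constant $14$ is really used: the non-block-diagonal contribution is written, following Slowik, as a bounded sum of idempotents (hence $(k+1)$-potents), using the classical trick that a shift operator and similar ``purely infinite'' pieces decompose into a small fixed number of idempotents. (4) Assemble: add up the summand-counts from the finite part and the infinite part, verify that the total can be arranged to be at most $14$ by grouping summands block-by-block (a direct sum of $(k+1)$-potents is $(k+1)$-potent), and confirm column-finiteness is preserved at each step since all constructions are block-local or triangular.

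The main obstacle I expect is the bookkeeping in step (3)–(4): ensuring that when we glue the finite-block decompositions to the infinite-part decomposition, the summand indices ``line up'' so that the grand total is $14$ rather than, say, $14$ plus a few extra from the finite blocks. The resolution, as in \cite{SlowikIdemp}, is that the finite blocks can be absorbed: each finite block already decomposes into at most (some small number $\le 14$) of $(k+1)$-potents, and by padding with zero blocks (which are trivially $(k+1)$-potent) one makes every block contribute to the same $14$ global summands. A secondary subtlety is verifying that Slowik's idempotent argument, which is stated over an arbitrary field $F$, really does not need any hypothesis on $F$ that would fail here — but since $(k+1)$-potents include idempotents, we inherit exactly Slowik's hypotheses, namely none beyond ``$F$ is a field'', so the bound and the generality transfer verbatim. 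I would close by remarking that the bound $14$ is very likely not optimal for $(k+1)$-potents with $k\ge 1$ — the added flexibility should reduce it — but the point of the theorem is that the $k$-potent generalization does no worse than the idempotent case.
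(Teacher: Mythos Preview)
Your very first observation is already a complete proof: if $E^2=E$ then $E^{k+1}=E$, so every idempotent is $(k+1)$-potent, and Slowik's theorem that any $A\in\mathcal{M}_{\mathcal{C}f}(F)$ is a sum of at most $14$ idempotents finishes the argument in one line. There is no ``real content'' left after that; the bound $14$ is preserved automatically and the summands are already in $\mathcal{M}_{\mathcal{C}f}(F)$ because Slowik's are. You should stop there.

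Your elaborate plan (steps (1)--(4)) is both unnecessary and, as written, wrong in places. Step~(1) is not how Slowik's argument works: a general column-finite matrix is \emph{not} conjugate to a direct sum of finite blocks plus a ``shift-like'' piece, and no such reduction appears in \cite{SlowikIdemp}. Step~(2) invokes Theorem~\ref{th1}, but that theorem is about \emph{complex} matrices and relies on trace/rank conditions and roots of unity in $\mathbb{C}$; it has no meaning over an arbitrary field $F$, which is the setting here. Steps~(3)--(4) inherit these problems and the ``gluing'' worry you raise is an artifact of a decomposition that does not exist.

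For comparison, the paper does \emph{not} quote Slowik's result as a black box. It re-runs Slowik's architecture: split $A=t_1+t_2+d$ where $t_1$ is upper triangular with constant diagonal $2\omega$, $t_2$ is lower triangular (column-finite) with constant diagonal $3\omega$, and $d$ is diagonal with entries $a_{ii}-5\omega$; then Propositions~\ref{prop1}, \ref{prop2}, \ref{prop3} show these are sums of $4$, $6$, $4$ genuine $(k+1)$-potents respectively, giving $4+6+4=14$. The point of the paper's route is that the summands produced are built from a nontrivial $k$th root of unity $\omega$ and are $(k+1)$-potent without being idempotent; your one-line route is shorter but only exhibits idempotent summands. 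Either is a valid proof of the stated theorem, but your four-step plan is neither.
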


\section{Sums of k-potent  complex square Matrices}
In this section we consider matrices with complex entries or complex matrices. We start introducing the notation. Let $k\geq 2$ be an integer. If a square matrix $A$ satisfies $A^k = A$, then $A$ is said to be $k$-potent (see definition in \cite{Huang}). For $k=2$, $A$ is said idempotent. 
A matrix $A$ is called of order $k$ if $A^k = I$, in the case that $k = 2$, $A$ is called an involution.

In the following, $tr(A)$ denotes the trace of a matrix $A$, $ran(A)$ denotes its range, $rank(A)$ the dimension of $ran(A)$, and $ker(A)$ the kernel of $A$. Two matrices $A$ and $B$ are similar if $XA = BX$ for some nonsingular matrix $X$, they are unitarily equivalent if the above $X$ can be chosen to be unitary. If $A$ and $B$ act on spaces $H$ and $K$, respectively, then
$$A\oplus B=\left[ \begin{array}{cc}
A & 0 \\
0 & B
\end{array}
\right]
$$
acts on $H\oplus K$, the orthogonal direct sum of $H$ and $K$.

Let $\omega \in F$ be a kth root of unity with $\omega \neq 1$, then we have the following lemma:



\begin{lemma}\label{lemapoli}
Let $A$ be an matrix $(k+1)$-potent and  $\omega$ the $k$-root of unity. There is a unique polynomial $F(x)$ with non-negative  integer coefficients  and degree $\leq k-1$ such that $tr(A)=F(\omega)$ and $rank(A)=F(1).$ 
\end{lemma}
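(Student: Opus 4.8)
The plan is to obtain $F$ directly from the spectral data of $A$. Since $A^{k+1}=A$, the matrix $A$ is annihilated by $p(x)=x(x^{k}-1)=x\prod_{j=0}^{k-1}(x-\omega^{j})$, a polynomial whose $k+1$ roots $0,1,\omega,\dots,\omega^{k-1}$ are pairwise distinct. Hence the minimal polynomial of $A$ has simple roots, so $A$ is diagonalizable and its spectrum is contained in $\{0,1,\omega,\dots,\omega^{k-1}\}$.

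For $j=0,1,\dots,k-1$ let $m_{j}\ge 0$ be the multiplicity of the eigenvalue $\omega^{j}$ (with the convention $\omega^{0}=1$) and let $m\ge 0$ be the multiplicity of $0$; then $m+\sum_{j=0}^{k-1}m_{j}$ is the size of $A$. Computing the trace in a basis diagonalizing $A$ gives $tr(A)=\sum_{j=0}^{k-1}m_{j}\omega^{j}$, and since for a diagonalizable matrix the rank equals the number of nonzero eigenvalues counted with multiplicity, $rank(A)=\sum_{j=0}^{k-1}m_{j}$. Thus $F(x):=\sum_{j=0}^{k-1}m_{j}x^{j}$ has non-negative integer coefficients and degree $\le k-1$, and satisfies $F(\omega)=tr(A)$ and $F(1)=rank(A)$. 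This proves existence; note moreover that the multiplicities $m_{j}$ are invariants of $A$, so this $F$ is canonically attached to $A$.

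For uniqueness, if $G$ is another polynomial of degree $\le k-1$ with $G(\omega)=tr(A)$ and $G(1)=rank(A)$, then $H:=F-G$ has degree $\le k-1$ and vanishes at $\omega$ and at $1$. Since $\omega$ is a primitive $k$-th root of unity, its minimal polynomial over $\mathbb{Q}$ is the cyclotomic polynomial $\Phi_{k}$, so $\Phi_{k}\mid H$; also $(x-1)\mid H$, and as $\Phi_{k}$ and $x-1$ are coprime we get $(x-1)\Phi_{k}\mid H$, whence comparing degrees forces $H=0$. I expect the delicate point to be precisely this degree comparison: it needs $\deg\big((x-1)\Phi_{k}\big)=1+\varphi(k)>k-1$, which holds cleanly when $k$ is prime; for composite $k$ the uniqueness must be understood as uniqueness of the canonical multiplicity polynomial $\sum_{j}m_{j}x^{j}$ (equivalently, one also imposes non-negativity of the coefficients of $G$ and argues directly from the eigenvalue data), and this is where the real content of the statement lies. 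The existence half, by contrast, is routine once diagonalizability is in hand.
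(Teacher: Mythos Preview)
Your existence argument is essentially the same as the paper's: both identify the eigenvalues of a $(k+1)$-potent matrix as lying in $\{0,1,\omega,\dots,\omega^{k-1}\}$ and define $F$ via the eigenvalue multiplicities, so that $F(\omega)=tr(A)$ and $F(1)$ counts the nonzero eigenvalues, i.e.\ the rank. Your use of diagonalizability (from the separable annihilating polynomial) is in fact cleaner than the paper's appeal to triangularizability, since ``rank equals number of nonzero diagonal entries'' genuinely requires the diagonal form.

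On uniqueness you go further than the paper, which does not attempt a uniqueness argument at all: it simply \emph{defines} $F$ as the multiplicity polynomial and moves on. Your hesitation is well placed, and in fact the literal uniqueness claim is false for composite $k$ even with the non-negativity constraint. For $k=4$ and $A=\mathrm{diag}(1,1,-1)$ one has $tr(A)=1$, $rank(A)=3$, and both $F(x)=2+x^{2}$ and $G(x)=1+x+x^{3}$ have non-negative integer coefficients, degree $\le 3$, and satisfy $F(i)=G(i)=1$, $F(1)=G(1)=3$. So your instinct that ``uniqueness must be understood as uniqueness of the canonical multiplicity polynomial'' is exactly right; that is all the paper establishes, and all that is used downstream.
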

\begin{proof} Consider a $(k+1)$-potent matrix $A$, then their possible eigenvalues satisfies the equation $x^{k+1}-x=0$, so they can assume the values $0,1,\omega,\omega^2, \cdots, \omega^{k-1}$ where $\omega$ is the $k$-root of unity. Then $A$ is triangularizable and by similarity,  the $tr(A)$ is a polynomial in $\omega$ with degree $\leq k-1$ and define this polynomial as $F(x)$. The coefficient of the variable $x^j$ in this polynomial represented the number of times that appears $w^j$ in the diagonal of the triangularization of $A$. So $F(1)$ represented the number of non zero elements that appears in the diagonal of the triangularization of $A$ and this number is the $rank(A)$.  
\end{proof}

In the following Lemma we show basic $(k+1)$-potent matrices.

\begin{lemma}\label{lema4} Let $\omega\in F$ be a kth root of unity. The matrices of the form

$$A= \left[\begin{array}{ccccccc} 0 & & & & & &  \\  & \ddots & & \vdots& & &  \\
& &0  & x & & & \\
& & & \omega & & & \\
& & & y & 0 & & \\
& & & \vdots & & \ddots & \\
& & & & & & 0\end{array}\right]$$
satisfies $X^k\neq X$ and $X^{k+1}=X$. So  $A$ is a $(k+1)$-potent matrix.
\end{lemma}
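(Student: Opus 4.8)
The plan is to verify the two matrix identities $A^k \neq A$ and $A^{k+1} = A$ by a direct computation, exploiting the fact that the matrix $A$ is essentially supported on a single $3\times 3$ (or, once the zeros are stripped away, effectively $1\times 1$) block together with a lot of zero rows and columns. First I would reduce to the core block: all the ``$\ddots$'' zeros on the diagonal contribute nothing to any power of $A$, so it suffices to understand the powers of the submatrix
$$B=\begin{bmatrix} 0 & x & 0\\ 0 & \omega & 0\\ 0 & y & 0\end{bmatrix}$$
living on the three coordinates that carry the entries $x,\omega,y$. Every other coordinate is sent to $0$ by $A$, so $A^m$ agrees with $B^m$ on these three coordinates and is zero elsewhere; hence $A^m=A$ for some $m$ iff $B^m=B$.

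Next I would compute $B^2$. Writing $e_1,e_2,e_3$ for the standard basis, one has $Be_1=0$, $Be_3=0$, and $Be_2 = xe_1+\omega e_2 + ye_3$. Therefore $B^2 e_2 = x\,Be_1 + \omega\,Be_2 + y\,Be_3 = \omega(xe_1+\omega e_2+ye_3) = \omega\, Be_2$, and $B^2$ kills $e_1,e_3$ as well. Inductively $B^m e_2 = \omega^{m-1} Be_2$ for all $m\geq 1$, and $B^m$ annihilates $e_1$ and $e_3$. Consequently $B^m = \omega^{m-1} B$ for every $m\geq 1$.

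Now the conclusion is immediate from the hypothesis that $\omega$ is a primitive-looking $k$th root of unity with $\omega\neq 1$ (more precisely, that $\omega^k=1$ and $\omega\neq 1$, so $\omega^{k-1}\neq 1$ since $\omega^{k-1}=\omega^{-1}$): we get $B^{k+1} = \omega^{k} B = B$, hence $A^{k+1}=A$, so $A$ is $(k+1)$-potent; and $B^{k} = \omega^{k-1} B \neq B$ because $\omega^{k-1}\neq 1$ and $B\neq 0$ (as $\omega\neq 0$, the entry in position $(2,2)$ of $B$ is nonzero), hence $A^k \neq A$. I do not anticipate a genuine obstacle here; the only point requiring a word of care is making sure the scalar $\omega^{k-1}$ is $\neq 1$, which is exactly where the hypothesis $\omega\neq 1$ (together with $\omega^k=1$) is used, and the observation that $A\neq 0$ so that $A^k\neq A$ really does follow from $A^k = \omega^{k-1}A$ with $\omega^{k-1}\neq 1$. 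One should also remark that the off-diagonal entries $x,y$ play no role in the identities and may be arbitrary; they are carried along only to make later constructions flexible.
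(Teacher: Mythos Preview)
Your proof is correct and follows essentially the same route as the paper's: the paper simply records that $A^j$ has the same shape as $A$ with the nonzero column scaled by $\omega^{j-1}$ (the diagonal entry becoming $\omega^j$), which is exactly your identity $A^m=\omega^{m-1}A$ phrased entrywise. Your version is in fact a bit more careful, since you make explicit why $\omega^{k-1}=\omega^{-1}\neq 1$ (i.e.\ the standing hypothesis $\omega\neq 1$) is what forces $A^k\neq A$, a point the paper leaves implicit.
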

\begin{proof}
For any $j$ positive integer  we have that 
$$A^j= \left[\begin{array}{ccccccc} 0 & & & & & &  \\  & \ddots & & \vdots& & &  \\
& &0  & x\omega^{j-1} & & & \\
& & & \omega^j & & & \\
& & & y\omega^{j-1} & 0 & & \\
& & & \vdots & & \ddots & \\
& & & & & & 0\end{array}\right].$$
\end{proof}

In some cases it's not difficult to show that some matrices are sums of $(k+1)$-potent matrices, see for example the folowing Remark. 
\begin{remark}
If $k$ is even, then we can expressed the identity matrix $I_n$ as sum of $(k+1)$-potent matrices, its follows from the property $1+\omega+\omega^2+\cdots+\omega^{k-1}=0$, in this case
$$\begin{array}{rl}I_n=& \displaystyle\sum^{k-1}_{i=1}( -w^i)\cdot I_n \\ = & \displaystyle\sum^{k-1}_{i=1} \sum^{n}_{j=1} \left[\begin{array}{ccccccc} 0 & & & & & &  \\  & \ddots & & \vdots& & &  \\
& &0  & 0 & & & \\
& & & -\omega^i & & & \\
& & &  0 & 0 & & \\
& & & \vdots & & \ddots & \\
& & &j-column & & & 0\end{array}\right]\end{array}$$
Then, $I_n$ can be expressed as sum of $n(k-1)$,  $(k+1)$-potent matrices. Also, consider $A=\alpha I_n$ with $\alpha$ an positive integer number, then $A$ can be expressed as sum of $\alpha n (k-1)$,  $(k+1)$-potent matrices.

\end{remark}
Now we prove our first main result.

\begin{proof}[Proof of Theorem \ref{th1}]
For the case $k=1$ it is proved by Wu in \cite{Wu}, so we consider the case that $\geq 2$. \\
$[\Rightarrow]$ If $A=\sum_{j=1}^m E_j$, where each $E_j$ is $(k+1)$-potent, then by the Lemma \ref{lemapoli} denote by $F_j(x)$ their respective polynomial. Then
$$tr(A)=tr(\sum_{j=1}^mE_j)=\sum_{j=1}^mtr(E_j)=\sum_{j=1}^mF_j(\omega).$$
Denote by $F(x)=\sum_{j=1}^mF_j(x)$ and observe that 
$$F(1)=\sum_{j=1}^mF_j(1)=\sum_{j=1}^m rank(E_j)\geq rank(\sum_{j=1}^mE_j)=rank(A).$$
$[\Leftarrow]$ Let $r=rank(A)$, then $A$ is unitary equivalent to a matrix of the form

$$\left[\begin{array}{cc} A_1 & 0 \\ A_2 & 0\end{array}\right]$$
with respect to the decomposition $H=rank(A^*)\oplus ker(A)$, where $A_1$ is of size $r\times r$. We have two cases to consider

\begin{itemize}
    \item[(1.)] $A_1$ is not a scalar matrix. By hipotesse, there is a polynomial $F(x)=a_0+a_1x+a_2x^2+\cdots+a_{k-1}x^{k-1}$ with integer coefficients and  $a_0,a_1,\cdots,a_{k-1}> 0$ . By the condition $F(1)=a_1+a_2+\cdots+a_{k-1}\geq r=rank(A)$ we can choose integer numbers $r_0,r_1,\cdots, r_{k-1}$ such that $a_i\geq r_i>0$ and $r_0+r_1+\cdots+r_{k-1}=r.$ Then for each $j=0,1,\cdots, k-1$, since $a_j\geq r_j$ then we choose positive integers $a_j^{(1)},a_j^{(2)},\cdots, a_j^{(r_j)}$ such that $a_j=a_j^{(1)}+a_j^{(2)}+\cdots + a_j^{(r_j)}$.
    Appliying this decomposition, we have that
    $$tr(A)= \sum^{r_0}_{i=1} a_0^{(i)}+ \left(\sum^{r_1}_{i=1} a_1^{(i)}\right)\omega+ \cdots +  
    \left(\sum^{r_{k-1}}_{i=1} a_{k-1}^{(i)}\right)\omega^{k-1}.$$
    By the Theorem 2 of Fillmore in \cite{Fillmore}, $A_1$ is similar to a matrix $B$  of the form
    
    $$\left[\begin{array}{cccc} 
    \left[\begin{array}{ccc} 
    a_0^{(1)} & &  \\  & \ddots & \\  & & a_0^{(r_0)}\end{array}\right]
    & & & * \\ 
    & \left[\begin{array}{ccc} a_1^{(1)}\omega & &  \\  & \ddots & \\  & & a_1^{(r_1)}\omega \end{array}\right] & & \\
    & & \ddots &  \\
    * & & & \left[\begin{array}{ccc} a_{k-1}^{(1)}\omega^{k-1} & &  \\  & \ddots & \\  & &  a_{k-1}^{(r_{k-1})}\omega^{k-1} \end{array}\right]
    \end{array}\right]$$

    Then, $A$ is similar to 
    
    $$C=\left[\begin{array}{c|c} B & 0 \\ \hline * & 0 \end{array}\right]$$
    
    Denote by $X_0^{(1)},X_0^{(2)},\cdots, X_0^{(r_0)}, X_1^{(1)},X_1^{(2)},\cdots,X_1^{(r_1)}, \cdots, X_{k-1}^{(1)},X_{k-1}^{(2)},$ $\cdots,X_{k-1}^{(r_{k-1})}$ the first $r=r_0+r_1+\cdots+r_{k-1}$ column vectors of $C$, then $C$ can be expressed as
    
    $$\begin{array}{rl} C=& \displaystyle\sum^{r_0}_{i=1}\left[ \begin{array}{ccccccc} 0& \cdots &0  & X_0^{(i)}  & 0 & \cdots & 0\end{array} \right]  \\ &+\displaystyle \sum^{r_1}_{i=1}\left[ \begin{array}{ccccccc} 0& \cdots &0  & X_1^{(i)}  & 0 & \cdots & 0\end{array} \right] \\
    &+ \cdots+ \displaystyle \sum^{r_{k-1}}_{i=1}\left[ \begin{array}{ccccccc} 0& \cdots &0  & X_{k-1}^{(i)}  & 0 & \cdots & 0\end{array} \right]\end{array}$$
    where, the column $X_j^{(i)}$ appears in the position  $r_0+r_1+\cdots+r_{j-1}+i$, or also
    
      $$\begin{array}{rl} C=& \displaystyle\sum^{r_0}_{i=1}a^{(i)}_0\left[ \begin{array}{ccccccc} 0& \cdots &0  & \displaystyle\frac{1}{a_0^{(i)}}X_0^{(i)}  & 0 & \cdots & 0\end{array} \right]  \\ &+\displaystyle \sum^{r_1}_{i=1}a_1^{(i)}\left[ \begin{array}{ccccccc} 0& \cdots &0  & \displaystyle\frac{1}{a_1^{(i)}}X_1^{(i)}  & 0 & \cdots & 0\end{array} \right] \\
    &+ \cdots+ \displaystyle \sum^{r_{k-1}}_{i=1}a_{k-1}^{(i)}\left[ \begin{array}{ccccccc} 0& \cdots &0  & \displaystyle\frac{1}{a_{k-1}^{(i)}}X_{k-1}^{(i)}  & 0 & \cdots & 0\end{array} \right]\end{array}
    $$
    
    Observe that, for $j>0$, the matrices 
    $$\left[ \begin{array}{ccccccc} 0& \cdots &0  & \displaystyle\frac{1}{a_j^{(i)}}X_j^{(i)}  & 0 & \cdots & 0\end{array} \right]$$
    are in the form of Lemma \ref{lema4} and from here we have that these matrices are $(k+1)$-potent and for $j=0$, follows from Wu in \cite{Wu} that these respective matrices are idempotents, so $(k+1)$-potent matrices. Then we have expressed $C$ as a sum of $\sum^{r_0}_{i=1}a_0^{(i)}+\sum^{r_1}_{i=1}a_1^{(i)}+\cdots+ \sum^{r_{k-1}}_{i=1}a_{k-1}^{(i)}=F(1)$ matrices $(k+1)$-potent and by similarity we concluded that they also hold for $A$.

    \item[(2.)] If $A_1$ is a scalar matrix, then $tr(A)=F(1)=r$ and by Wu in \cite{Wu} we have that $A$ is sum of $r$ idempotent matrices and so $(k+1)$-potent matrices.
\end{itemize}
\end{proof}

Now we prove our second main result.


\begin{proof}[Proof of Theorem \ref{th4}]
    The necessity is similar to the proof of Theorem \ref{th1}.
    For the converse, let $r=rank(A)$ then $A$ is unitary equivalent to matrix of the form 
    
    $$\left[\begin{array}{cc} A_1 & 0 \\ A_2 & 0\end{array}\right]$$
    with $A_1$ an matrix of size $r\times r$. Choose positive integer numbers $r_0,r_1,\cdots, r_n$ such that $a_0\geq r_0$, $F_i(1)\geq r_i$ for $i=1,2,\cdots,n$ and $r_0+r_1+\cdots+r_n=rank(A)$.
    
    Proceeding similar to the proof of Theorem \ref{th1} we have that $A_1$ is similar to an matrix 
    
    $$B=\left[\begin{array}{cccc}B_0 & & & * \\ & B_1 & & \\ & & \ddots & \\ * & & & B_n\end{array}\right]$$
    such that each $B_i$ is a $r_i\times r_i$ matrix with $tr(B_0)=a_0$ and $tr(B_i)=F_i(\omega_i)$
 for $i=1,2,\cdots,n$. Observe that
 $$B=\left[\begin{array}{cccc}B_0 & 0 & & 0 \\ *  & 0 & & 0 \\ &   & \ddots & \\ * & 0 & & 0\end{array}\right]+\left[\begin{array}{cccc}0& * & & 0 \\ 0 & B_1 & & 0 \\ & & \ddots & \\ 0 & * & & 0\end{array}\right]+\cdots+\left[\begin{array}{cccc}0 & 0 & & * \\  0 & 0 & & * \\ & & \ddots & \\ 0 & 0  & & B_n\end{array}\right] $$
 and applying the Theorem \ref{th1} in each to the matrices $B_i$ we obtain the result.
 
    \end{proof}

Wu in \cite{Wu} gave a result that  $A$ is idempotent if and only if $2A-I$ is an involution. Unfortunately, if $k>2$ this equivalence is not know but in the following Lemmas we show various forms of construct these special matrices from the other.
\begin{lemma}
If $A$ is an matrix of order $k$ then the matrix $$B=\displaystyle\frac{1}{k}(I+A+A^2+\cdots +A^{k-1}),$$ is $k$-potent. 
\end{lemma}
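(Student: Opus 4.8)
The plan is to verify directly that $B^k = B$ by exploiting the identity $A^k = I$. First I would expand $B^k = \frac{1}{k^k}(I + A + \cdots + A^{k-1})^k$ and observe that, since powers of $A$ are periodic modulo $k$ (because $A^k = I$, so $A^m = A^{m \bmod k}$), every monomial in the expansion collapses to one of $I, A, \ldots, A^{k-1}$. So $B^k = \frac{1}{k^k}\sum_{j=0}^{k-1} c_j A^j$ for suitable non-negative integers $c_j$ counting the number of ways to write $j$ as a sum of $k$ exponents from $\{0,1,\ldots,k-1\}$ modulo $k$.

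The key step is to show that all the coefficients $c_j$ are equal, namely $c_j = k^{k-1}$ for every $j$, which immediately gives $B^k = \frac{1}{k^k} \cdot k^{k-1}(I + A + \cdots + A^{k-1}) = \frac{1}{k}(I + A + \cdots + A^{k-1}) = B$. The cleanest way to see the coefficients are equal is a counting argument in $\Z/k\Z$: the number of tuples $(e_1,\ldots,e_k) \in (\Z/k\Z)^k$ with $e_1 + \cdots + e_k \equiv j \pmod k$ is independent of $j$, since for any fixed choice of $e_1,\ldots,e_{k-1}$ there is exactly one value of $e_k$ making the sum congruent to $j$; hence each residue class is hit exactly $k^{k-1}$ times. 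An alternative, perhaps slicker, route is to notice that $P := I + A + \cdots + A^{k-1}$ satisfies $AP = P$ (again because $A^k = I$ telescopes the sum), so $P^2 = (I + A + \cdots + A^{k-1})P = P + P + \cdots + P = kP$, whence $P^k = k^{k-1}P$ by induction, and $B^k = \frac{1}{k^k}P^k = \frac{1}{k}P = B$.

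I expect the only mild obstacle to be bookkeeping: making the periodicity-of-exponents reduction precise and confirming the combinatorial count (or, in the second approach, setting up the induction $P^m = k^{m-1}P$ cleanly). Both are routine. I would present the second approach since the identity $AP = P \Rightarrow P^2 = kP$ is short and transparent, and then simply iterate. One should also note that $k$ is invertible in the relevant field — here the matrices are complex, so dividing by $k$ is unproblematic; if one wanted the statement over a general field one would add the hypothesis $\operatorname{char} F \nmid k$.

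\begin{proof}
Set $P = I + A + A^2 + \cdots + A^{k-1}$, so that $B = \frac{1}{k}P$. Since $A^k = I$, we have
$$AP = A + A^2 + \cdots + A^{k-1} + A^k = A + A^2 + \cdots + A^{k-1} + I = P.$$
Consequently $A^j P = P$ for every $j \geq 0$, and therefore
$$P^2 = (I + A + A^2 + \cdots + A^{k-1})P = P + P + \cdots + P = kP.$$
By an easy induction, $P^m = k^{m-1}P$ for all $m \geq 1$; indeed $P^{m+1} = P^m P = k^{m-1}P^2 = k^{m-1}(kP) = k^m P$. Taking $m = k$ gives $P^k = k^{k-1}P$, and hence
$$B^k = \frac{1}{k^k}P^k = \frac{1}{k^k} \cdot k^{k-1} P = \frac{1}{k}P = B.$$
Thus $B$ is $k$-potent.
\end{proof}
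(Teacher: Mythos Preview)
Your proof is correct; the paper's own proof consists entirely of the sentence ``Its can proof using simple calculations,'' so your argument simply supplies those calculations. One small remark: your identity $P^2 = kP$ already gives $B^2 = \frac{1}{k^2}P^2 = \frac{1}{k}P = B$, so $B$ is in fact idempotent (hence $k$-potent for every $k\geq 2$), and the induction on $m$ is not strictly needed.
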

\begin{proof}
Its can proof using simple calculations.
\end{proof}
In the following Lemma we show how its possible construct new matrices of order $k$ using $k$-potent  matrices.
\begin{lemma} Let $A$ be an matrix $k$-potent and $\omega$ a kth root of unity, then:
\begin{itemize}
    \item [1.)] $\omega-1$ is an solution of the equation \begin{equation} x^{k-1}+\binom{k}{1}x^{k-2}+\binom{k}{2}x^{k-3}+\cdots + \binom{k}{k-2}x+\binom{k}{k-1}=0 \label{eq1}\end{equation}
    and the matrix $B=(\omega-1)A^{k-1}+I$ is an matrix of order $k$.
    \item[2.)] If $k$ is even, then $1+\omega$ is solution of the equation
    \begin{equation} x^{k-1}-\binom{k}{1}x^{k-2}+\binom{k}{2}x^{k-3}-\cdots + \binom{k}{k-2}x-\binom{k}{k-1}=0 \label{eq2}\end{equation}
    and the matrix $C=(1+\omega)A^{k-1}+I$ is an matrix of order $k$.
\end{itemize}

\end{lemma}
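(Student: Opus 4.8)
The plan is to treat both items in exactly the same way: first establish the stated polynomial identities about $\omega-1$ (resp. $1+\omega$) by a purely algebraic manipulation, and then use the hypothesis $A^k = A$ to verify that the proposed matrix is of order $k$. For part (1), the key observation is that $\omega^k = 1$ can be rewritten as $\omega^k - 1 = 0$, and setting $y = \omega - 1$ (so $\omega = y+1$) and expanding $(y+1)^k - 1$ by the binomial theorem gives
\[
(y+1)^k - 1 = \sum_{j=1}^{k}\binom{k}{j} y^j = y\left(y^{k-1} + \binom{k}{1}y^{k-2} + \binom{k}{2}y^{k-3} + \cdots + \binom{k}{k-2}y + \binom{k}{k-1}\right).
\]
Since $\omega\neq 1$ forces $y = \omega-1\neq 0$, the bracketed factor must vanish, which is precisely equation~\eqref{eq1}. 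For part (2), substitute $z = 1+\omega$, i.e. $\omega = z - 1$; now $\omega^k = 1$ together with $k$ even gives $(\omega)^k = ((-1)(1-z))^k = (1-z)^k = 1$, so $(1-z)^k - 1 = 0$, and expanding $(1-z)^k - 1 = \sum_{j=1}^{k}\binom{k}{j}(-z)^j = -z\left(z^{k-1} - \binom{k}{1}z^{k-2} + \cdots + \binom{k}{k-2}z - \binom{k}{k-1}\right)$; again $z = 1+\omega\neq 0$ (as $\omega\neq -1$, which holds because $\omega$ is a $k$-th root of unity distinct from $1$ and we may take the primitive one, or simply note $\omega=-1$ would make $z=0$ and the claim vacuous), so the second factor vanishes, giving~\eqref{eq2}.

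For the matrix statements, the main tool is the idempotent-like behavior of $A^{k-1}$ when $A^k = A$. Observe that if $A^k = A$ then $A^{k+1} = A^2$, and more generally $A^{k-1}\cdot A^{k-1} = A^{2k-2} = A^{k-1}\cdot A^{k-1}$; the cleanest route is to note $E := A^{k-1}$ satisfies $E^2 = A^{2k-2}$, and since $A^k = A$ implies $A^{m+k-1} = A^m$ for all $m\geq 1$, we get $A^{2k-2} = A^{k-1}$, i.e. $E^2 = E$. So $A^{k-1}$ is an idempotent. Then for $B = (\omega-1)E + I$ with $E$ idempotent, a direct computation using the binomial theorem and $E^j = E$ for all $j\geq 1$ gives
\[
B^k = \bigl((\omega-1)E + I\bigr)^k = I + \left(\sum_{j=1}^{k}\binom{k}{j}(\omega-1)^j\right)E = I + \bigl((\omega-1+1)^k - 1\bigr)E = I + (\omega^k - 1)E = I,
\]
using $\omega^k = 1$. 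The computation for $C = (1+\omega)E + I$ in part (2) is identical with $\omega-1$ replaced by $1+\omega$: one gets $C^k = I + ((1+\omega)^k - 1)E$, and $(1+\omega)^k - 1 = (1-(-\omega))^k - 1$... more directly, the polynomial identity~\eqref{eq2} already shows $(1+\omega)^k = $ the relevant value; concretely $(1+\omega)\cdot\bigl(\text{LHS of \eqref{eq2}}\bigr) = (1+\omega)^k - 1$ up to sign bookkeeping, so $(1+\omega)^k - 1 = 0$ when $k$ is even (equivalently $(1-z)^k=1$ rearranges to $(1+\omega)^k = 1$ only after care with signs — I would instead just substitute $\omega^k=1$ into $(1+\omega)^k$ expanded, matching~\eqref{eq2}), hence $C^k = I$.

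I expect the only real subtlety — the main obstacle — to be the sign bookkeeping in part (2): one must be careful about whether the relevant vanishing identity is $(1+\omega)^k - 1 = 0$ or something involving $(-1)^k$, and this is exactly where the hypothesis ``$k$ even'' is used. I would handle this by substituting $\omega = z-1$ into $\omega^k - 1 = 0$ and expanding $(z-1)^k$ rather than guessing, so that the parity of $k$ enters transparently through the factor $(-1)^k = 1$. Everything else — the binomial expansions, the idempotence of $A^{k-1}$, and the final substitution $\omega^k = 1$ — is routine, which is presumably why the companion lemmas in this section are dispatched with ``it can be proved using simple calculations.'' I would nonetheless spell out the idempotence of $A^{k-1}$ explicitly since it is the one nonobvious structural fact the argument rests on.
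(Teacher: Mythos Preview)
Your argument for part (1) is correct and essentially identical to the paper's: both expand $\omega^k=(\omega-1+1)^k$ binomially to obtain equation~\eqref{eq1}, both observe that $E:=A^{k-1}$ is idempotent (the paper phrases this as $(A^{k-1})^i=A^{k-1}$ for $i\ge 2$), and both expand $B^k=\bigl((\omega-1)E+I\bigr)^k$ binomially to conclude $B^k=I$.

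For the matrix claim in part (2), however, your computation contains a genuine error. With $E=A^{k-1}$ idempotent and $C=(1+\omega)E+I$, the binomial expansion gives
\[
C^k \;=\; I+\Bigl(\sum_{j=1}^{k}\binom{k}{j}(1+\omega)^{j}\Bigr)E \;=\; I+\bigl((2+\omega)^{k}-1\bigr)E,
\]
not $I+\bigl((1+\omega)^{k}-1\bigr)E$ as you wrote; the ``$+I$'' shifts the base by $1$, just as it did in part (1). And $(2+\omega)^{k}\neq 1$ in general (e.g.\ $k=4$, $\omega=i$ gives $|2+i|=\sqrt{5}$). Your own correct earlier identity was $(1-z)^{k}=1$ with $z=1+\omega$, i.e.\ $(-\omega)^{k}=1$; this does \emph{not} rearrange to $(1+\omega)^{k}=1$, and your hesitation at that point was well founded. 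In fact the printed statement of part (2) appears to be a misprint: the matrix $(1+\omega)A^{k-1}+I$ is not of order $k$ in general, whereas $(1+\omega)A^{k-1}-I$ is (its eigenvalues on $\ker E$ and $\operatorname{ran}E$ are $-1$ and $\omega$, both $k$th roots of unity when $k$ is even), and with $-I$ the analogue of your computation goes through cleanly. The paper's own proof of (2) is only the sentence ``The proof of item (2) is similar,'' so it does not detect this issue either.
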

\begin{proof} Since that $\omega^k=1$ and using the Newton binomial formula, then
$$\begin{array}{rl} 1=&\omega^k \\ 1=&(\omega-1+1)^k\\ 1=& \displaystyle \sum^{k}_{i=0}\binom{k}{i} (\omega-1)^i \\
1=&\displaystyle\sum^{k}_{i=1} \binom{k}{i} (\omega-1)^i+1 \\
0=&(\omega-1)\left[\displaystyle\sum^{k}_{i=1}\binom{k}{i} (\omega-1)^{i-1}\right],
\end{array}$$
So, $\omega-1$ satisfies the equation (\ref{eq1}). Therefore, we observe that, for $i=2,3,\cdots,k-1$ $(A^{k-1})^i=A^{(k-1)i}=A^{(ki-i)}=A^{(i-1)k+k-i}=(A^k)^{(i-1)}\cdot A^{k-i}=A^{i-1}\cdot A^{k-i}=A^{k-1},$ and using the Newton binomial formula
$$\begin{array}{ll}B^k& = (\alpha A^{k-1}+I)^k \\ &= \displaystyle \sum^{k}_{i=0}\binom{k}{i} (\alpha A^{k-1})^i 
\\ & =\displaystyle \sum^{k}_{i=1}\binom{k}{i}\alpha^i A^{k-1} + I \\
& = \alpha\left(\displaystyle \sum^{k}_{i=1}\binom{k}{i}\alpha^{i-1}\right)+I 
\\ & = I,\end{array}$$ with $\alpha=\omega-1$.
The proof of item (2) is similar.
\end{proof}

Using the simple matrices of Lemma \ref{lema4} its possible construct  $(k+1)$-potent matrices.

\begin{lemma}\label{lemafiniteorder} Let $\omega$ be a kth root of unity and $k$ an even number. With the matrices of the form 
$$A= \left[\begin{array}{ccccccc} 0 & & & & & &  \\  & \ddots & & \vdots& & &  \\
& &0  & x & & & \\
& & & \omega & & & \\
& & & y & 0 & & \\
& & & \vdots & & \ddots & \\
& & & & & & 0\end{array}\right]$$ 
we can define $B=2A-\omega I$, then $B$  is a matrix of order $k$.
\end{lemma}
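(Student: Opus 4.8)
The plan is to exploit the explicit description of the powers of $A$ recorded in Lemma~\ref{lema4}. Reading off that displayed formula at $j=2$, one sees that $A^2$ has exactly the shape of $A$ with the distinguished diagonal entry $\omega^2$ in place of $\omega$ and the two off-diagonal entries $x\omega,\,y\omega$ in place of $x,\,y$; that is, $A^2=\omega A$. (Equivalently $A^j=\omega^{\,j-1}A$ for all $j\ge 1$, but only the case $j=2$ is needed here.)

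Granting this, the first step is to compute $B^2$. Since $A$ commutes with $I$,
\[
B^2=(2A-\omega I)^2=4A^2-4\omega A+\omega^2 I=4\omega A-4\omega A+\omega^2 I=\omega^2 I .
\]
So $B^2$ is the scalar matrix $\omega^2 I$. The second step uses the hypothesis that $k$ is even: writing $k=2m$,
\[
B^k=(B^2)^m=(\omega^2 I)^m=\omega^{2m}I=\omega^k I=I,
\]
the last equality because $\omega$ is a $k$th root of unity. Hence $B^k=I$, i.e.\ $B$ is a matrix of order $k$, which is the assertion.

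There is essentially no obstacle here: the argument is a two-line computation once the identity $A^2=\omega A$ is extracted from Lemma~\ref{lema4}. The only point worth flagging is that evenness of $k$ is genuinely used — for odd $k$ the same computation yields $B^k=(B^2)^{(k-1)/2}B=\omega^{k-1}(2A-\omega I)=2\omega^{k-1}A-I$, which is not a scalar matrix, so producing a finite-order matrix via $B=2A-\omega I$ is special to the even case. One could alternatively expand $B^k$ directly by the binomial theorem, substitute $A^j=\omega^{\,j-1}A$, and use $\sum_{j=1}^{k}\binom{k}{j}2^j(-1)^{k-j}=(2-1)^k-(-1)^k=1-(-1)^k=0$ for even $k$, but the route through $B^2=\omega^2 I$ is the shortest.
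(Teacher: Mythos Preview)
Your proof is correct and follows essentially the same approach as the paper, which merely writes out $B$ explicitly and appeals to ``simple verification.'' Your route through the identity $A^{2}=\omega A$ (read off from Lemma~\ref{lema4}) to obtain $B^{2}=\omega^{2}I$ and hence $B^{k}=I$ for even $k$ is precisely the clean way to carry out that verification, and also makes transparent where evenness of $k$ enters.
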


\begin{proof}
The matrix $B$ is in the form
$$\left[\begin{array}{ccccccc} -\omega & & & & & &  \\  & \ddots & & \vdots& & &  \\
& &-\omega  & 2x & & & \\
& & & \omega & & & \\
& & & 2y & -\omega & & \\
& & & \vdots & & \ddots & \\
& & & & & & -\omega\end{array}\right]$$
and by simple verification we proof the result.
\end{proof}

Applying the proof of the Theorem \ref{th1} and the Lemma \ref{lemafiniteorder} we can enunciated the following 

\begin{corollary}\label{corol1} Let $A$
an $n\times n$ matrix and $k$ an even number. Then $A$ is the sum of finitely many matrices of order $k$ if and only if there is an polynomial $F(x) $  of degree $<k-1$ and   positive integer coefficients   such that $tr(A)=F(\omega)$ with $\omega$ a kth root of unity.
\end{corollary}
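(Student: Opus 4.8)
\textbf{Proof proposal for Corollary \ref{corol1}.}

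The plan is to mimic the structure of the proof of Theorem \ref{th1} almost verbatim, replacing every occurrence of a $(k+1)$-potent building block by a matrix of order $k$, via the correspondence furnished by Lemma \ref{lemafiniteorder}. First I would treat the necessity direction. Suppose $A=\sum_{j=1}^m B_j$ with each $B_j$ of order $k$. Each $B_j$ is annihilated by $x^k-1$, hence is diagonalizable with eigenvalues among the $k$th roots of unity $1,\omega,\dots,\omega^{k-1}$; consequently $\operatorname{tr}(B_j)$ is a polynomial in $\omega$ with non-negative integer coefficients and degree $\le k-1$, and in fact (because $1+\omega+\cdots+\omega^{k-1}=0$, which for even $k$ also lets one lower the degree exactly as in the Remark) one may normalize so that the degree is $<k-1$. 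Summing the $F_j$'s gives $F(x)=\sum_j F_j(x)$ with the required properties and $\operatorname{tr}(A)=F(\omega)$. I should be a little careful here about whether the coefficients are required to be strictly positive or merely non-negative, and about the exact degree bound; aligning the statement with Theorem \ref{th1} and Lemma \ref{lemapoli} will settle this.

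For the sufficiency direction I would argue as follows. Given the polynomial $F(x)=a_0+a_1x+\cdots+a_{k-2}x^{k-2}$, set $F(1)=a_0+\cdots+a_{k-2}=:r$. By the same Fillmore-type construction used in the proof of Theorem \ref{th1}, there is a matrix $C$ similar to $A\oplus(\text{zero block})$ whose first $r$ columns, after scaling, are each of the shape appearing in Lemma \ref{lema4}, so that $C$ is a sum of $r$ matrices $A_1,\dots,A_r$ each of that elementary form (the ones contributing the eigenvalue $1$, i.e. the $j=0$ block, being ordinary idempotents). Now apply Lemma \ref{lemafiniteorder}: for each elementary block $A_t$ with the $\omega$-entry, the matrix $B_t=2A_t-\omega I$ has order $k$; and for the idempotent blocks one uses instead $2A_t-I$, which is an involution and hence (since $k$ is even, so $2\mid k$) also has order $k$. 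The point is that $2A_t-\omega I$ and $2A_t-\omega' I$ for the various roots $\omega'$ realized in $C$ all lie in the group of order-$k$ matrices, and a suitable integer linear combination of them with coefficients dictated by the $a_j$'s reproduces $C$ — exactly the bookkeeping already carried out in Theorem \ref{th1}, only with the scalars shifted by the $-\omega I$ terms, whose total contribution is a scalar matrix that can itself be absorbed using the Remark (expressing a scalar multiple of $I_n$ as a sum of order-$k$ matrices when $k$ is even).

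The main obstacle I anticipate is precisely this last absorption step: in Theorem \ref{th1} the building blocks $\tfrac{1}{a}X$ are genuine $(k+1)$-potents and the decomposition of $C$ is immediate, but here each order-$k$ block equals $2A_t-\omega I$, so rewriting $C=\sum_t \lambda_t A_t$ as a combination of the $B_t$ introduces an extra term $(\sum_t \lambda_t \omega_t)\,I$ (more precisely, a diagonal scalar on each coordinate block) that must be cleared. This is where evenness of $k$ is essential and where one invokes the Remark to write that residual scalar matrix as a further sum of order-$k$ matrices; one must check that the residual scalar is an integer (or a sum of roots of unity of the allowed form) so that the Remark applies. Once that is verified, similarity transports the decomposition back from $C$ to $A$, and the count of summands is finite (bounded in terms of $F(1)$ and the coefficients of $F$), completing the proof.
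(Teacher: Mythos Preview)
Your necessity argument is essentially the paper's: eigenvalues of an order-$k$ matrix are $k$th roots of unity, so the trace is a polynomial in $\omega$ with non-negative integer coefficients, and this is stable under sums.

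For sufficiency you have the right idea---use Lemma~\ref{lemafiniteorder} to trade each elementary $(k+1)$-potent block $A_t$ for the order-$k$ matrix $B_t=2A_t-\omega I$---but there is a genuine gap, and it is exactly the obstacle you flagged without resolving. If $C=\sum_t A_t$, then $\sum_t B_t=2C-(\#\text{summands})\,\omega I$; you recover $2C$ up to a scalar multiple of $\omega I$, \emph{not} $C$. No ``integer linear combination of the $B_t$'' helps: the statement asks for a \emph{sum} of order-$k$ matrices, so every coefficient must be $1$, and there is no way to divide by $2$ afterwards. The Remark you appeal to does not save this either, since it expresses $I_n$ as a sum of $(k+1)$-potents, not of order-$k$ matrices, and in any case it cannot produce the missing factor $\tfrac12$.

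The paper's device neatly avoids both this doubling problem and the absent rank hypothesis (note the corollary has no condition $F(1)\ge\operatorname{rank}(A)$, so your direct Fillmore step is not justified as stated). Instead of decomposing $A$, the paper applies Theorem~\ref{th1} to
\[
S=\tfrac12\bigl(A+m\omega I\bigr),
\]
with $m$ an integer chosen large enough (and of the right parity) so that the associated polynomial $G(x)=\tfrac12 F(x)+\tfrac12 mnx$ satisfies $G(1)\ge n\ge\operatorname{rank}(S)$. Theorem~\ref{th1} then writes $S=\sum_i E_i$ with each $E_i$ of the elementary shape, and now
\[
A=2S-m\omega I=\sum_i(2E_i-\omega I)+(r-m)\,\omega I=\sum_i B_i+(r-m)\,\omega I,
\]
where $r$ is the number of summands. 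Each $B_i$ has order $k$ by Lemma~\ref{lemafiniteorder} (with the $j=0$ idempotent blocks giving involutions $2E_i-I$, of order $2\mid k$), and $\pm\omega I$ has order dividing $k$ since $k$ is even. Thus the halving is built in from the start, and the $m\omega I$ padding simultaneously manufactures the rank inequality you needed. Your outline becomes a correct proof once you insert this preliminary passage from $A$ to $S$.
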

\begin{proof}
$[\Leftarrow]$ The trace of an matrix of order $k$ is an polynomial with integer coefficients and the same hold for sums of these elements.
\newline
$[\Rightarrow]$ Consider $n\geq 2$ and choose an integer $m$ such that $m\geq 2-\displaystyle\frac{1}{n}F(1)$ with $m$ and $F(1)$ are even or odd at the same time. Define $S=\displaystyle\frac{1}{2}(A+m\omega I)$ and  we have that
$tr(S)=\displaystyle\frac{1}{2}F(\omega)+\displaystyle\frac{1}{2}mn\omega$. Here define $G(x)=\displaystyle\frac{1}{2}F(x)+\displaystyle\frac{1}{2}mnx$ then $tr(S)=G(\omega)$ and $G(1)\geq n> rank(A)$, by the Theorem \ref{th1} we have that $S$ is a sum of $(k+1)$-potent matrices, $S=\sum^r_{i=1} E_i$ where the matrix $E_i$ have the form of Lemma \ref{lemafiniteorder}. Define $B_i=2E_i-\omega I$ for $i=1,2,\cdots, r$ then by the same Lemma, $B_1,B_2,\cdots,B_r$ are matrices of order $k$. So 
$$\begin{array}{rl}
T=&2S-m\omega I \\
=& 2(\displaystyle\sum^r_{i=1}E_i)-m\omega I \\ =& \displaystyle\sum^r_{i=1} (2E_i-\omega I)+(r-m)\omega I  \\
=& \displaystyle\sum^r_{i=1}B_i+(r-m)\omega I,\end{array}$$
is a sum of matrices of order $k$.
\end{proof}

And, using the Theorem \ref{th4} and the last Corollary we get the following corollary for finite order matrices:

\begin{corollary}\label{corolfiniteorder}
Let $A$ be a complex square matrix. Then $A$ is the sum of finitely many matrices of finite (different) order if and only if 
\begin{itemize}
    \item [1.)] There are $\omega_1,\omega_2,\cdots,\omega_n$ and $\beta_1,\beta_2,\cdots,\beta_n$ all even integer numbers with  $\omega_i\neq \omega_j$ and $\beta_i\neq \beta_j$ for $i\neq j$ such that $\omega_i$ is a $\beta_i th $ root of unity for $i=1,2,\cdots,n$.
    \item[2.)] There are polynomials $F_1(x),F_2(x),\cdots, F_n(x)$ with positive coefficients integers and $1\leq degree(F_i) \leq \beta_i-1$, such that $$tr(A)=a_0+F_1(\omega_i)+F_2(\omega_2)+\cdots + F_n(\omega_n)$$
    where $a_0$ is a positive integer number.
    \end{itemize}
    
\end{corollary}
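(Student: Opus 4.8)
The plan is to obtain both implications from results already in hand, by treating ``finite order'' as the ``square root'' of ``$(k+1)$--potent'': this is exactly the passage from Theorem~\ref{th1} to Corollary~\ref{corol1}, now run with several orders simultaneously via Theorem~\ref{th4}.

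\emph{Necessity.} Starting from $A=\sum_{j=1}^{m}C_j$ with each $C_j$ of finite order $d_j$, I would use that the eigenvalues of the triangularizable matrix $C_j$ are $d_j$th roots of unity, so that $tr(C_j)=c_j+G_j(\zeta_{d_j})$ with $\zeta_{d_j}$ a primitive $d_j$th root of unity, $c_j\in\Z_{\geq 0}$ the multiplicity of the eigenvalue $1$, and $G_j$ of degree $\leq d_j-1$ with non-negative integer coefficients and zero constant term. When $d_j$ is odd I would replace it by $2d_j$ (still a period of $C_j$) through $\zeta_{d_j}=\zeta_{2d_j}^{2}$, so that every order becomes even; summing over $j$ and merging the terms of equal order yields $tr(A)=a_0'+\sum_i F_i(\omega_i)$ with $\beta_i$ the distinct even orders, $\omega_i$ a primitive $\beta_i$th root of unity, $a_0'\geq 0$ and $\deg F_i\leq\beta_i-1$. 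Finally, adding for one fixed $i$ the vanishing sum $1+\omega_i+\omega_i^{2}+\cdots+\omega_i^{\beta_i-1}=0$ (the ``$1$'' being absorbed into $a_0'$ and the rest into $F_i$) makes $a_0\geq 1$ and gives $F_i$ strictly positive coefficients in every degree $1,\dots,\beta_i-1$, without changing $tr(A)$ or the degree bounds; this is conditions~1 and~2.

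\emph{Sufficiency.} Assuming 1 and 2 for an $n_0\times n_0$ matrix $A$ (the scalar case and $n_0=1$ being settled directly by writing $A$ as an explicit short sum of scalars $\omega_i^{t}I$ and $I$), I would imitate Corollary~\ref{corol1}: fix one even order $\beta_1$, pick a large even integer $m$, and form
$$S=\tfrac12\bigl(A+m\,\omega_1 I\bigr),$$
so that $A=2S-m\,\omega_1 I$. Arranging $m$ and, if necessary, adding further vanishing cyclotomic sums to the $F_i$ beforehand (exactly the parity adjustment of Corollary~\ref{corol1}), one gets $tr(S)=b_0+H_1(\omega_1)+\sum_{i\geq 2}H_i(\omega_i)$ with $b_0$ a positive integer, each $H_i$ of degree in $[1,\beta_i-1]$ with positive integer coefficients, $H_1(x)=\tfrac12 F_1(x)+\tfrac12 m n_0 x$ up to those adjustments, and $b_0+\sum_i H_i(1)\geq\tfrac12 m n_0\geq n_0\geq rank(S)$ once $m\geq 2$. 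Theorem~\ref{th4} then writes $S=\sum_\ell E_\ell$ where, by the explicit constructions inside the proofs of Theorems~\ref{th1} and~\ref{th4}, each $E_\ell$ is either an idempotent or a basic $(\beta_i+1)$--potent matrix of the form of Lemma~\ref{lemafiniteorder} carrying a power $\omega_i^{t}$ on its diagonal. Replacing each idempotent $E_\ell$ by the involution $2E_\ell-I$ and each basic summand by $2E_\ell-\omega_i^{t}I$, which has order $\beta_i$ by Lemma~\ref{lemafiniteorder} (this is the one place the evenness of every $\beta_i$ is used), and re-expanding gives
$$A=2S-m\,\omega_1 I=\sum_\ell\bigl(2E_\ell-\mu_\ell I\bigr)+\Bigl(p\,I+\sum_{i,t}q_{i,t}\,\omega_i^{t}I-m\,\omega_1 I\Bigr),$$
with $p$ the number of idempotent summands, $q_{i,t}$ the number of basic summands carrying $\omega_i^{t}$, and $\mu_\ell\in\{1\}\cup\{\omega_i^{t}\}$. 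The first sum is already a sum of finite--order matrices; in the bracket, $p\,I$ is a sum of $p$ involutions, each $q_{i,t}\,\omega_i^{t}I$ a sum of $q_{i,t}$ matrices of order $\beta_i$, and the coefficient $q_{1,1}-m$ of $\omega_1 I$ is non-negative since $q_{1,1}\geq\tfrac12 m n_0\geq m$ when $n_0\geq 2$. Hence $A$ is a sum of finitely many finite--order matrices.

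The conceptual skeleton above is short; the one genuinely delicate point — and the step I would expect to absorb most of the work — is the arithmetic making the halving in $S=\tfrac12(A+m\,\omega_1 I)$ legitimate, i.e.\ forcing $a_0$ and every coefficient of every $F_i$ into the right parity class (using only the cyclotomic relations $\sum_{t=0}^{\beta_i-1}\omega_i^{t}=0$ and the freedom to choose $m$) while simultaneously keeping $p\,I$, $\sum_{i,t}q_{i,t}\,\omega_i^{t}I$ and $(q_{1,1}-m)\,\omega_1 I$ with non-negative multiplicities. This is precisely the bookkeeping carried out, for a single order, in the proof of Corollary~\ref{corol1}, and I would organise the present proof so as to reduce to that case blockwise.
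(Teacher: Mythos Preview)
Your approach is essentially the same as the paper's: the paper's proof consists of the single sentence ``The proof follows immediately from the proofs of Theorem~\ref{th4} and Corollary~\ref{corol1},'' and your sketch does exactly that, running the halving trick $S=\tfrac12(A+m\omega I)$ of Corollary~\ref{corol1} through the multi-order decomposition of Theorem~\ref{th4} and then converting each basic $(k+1)$-potent summand to a finite-order matrix via Lemma~\ref{lemafiniteorder}. Your write-up is considerably more detailed than the paper's (and honestly flags the parity bookkeeping that the paper leaves implicit), but the route is the same.
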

\begin{proof}
The proof follows immediately from the proofs of  Theorem \ref{th4} and  Corollary \ref{corol1}.
\end{proof}
In the following Lemmas we show some results about  $k$-potent matrices.

\begin{lemma}\label{lema1}
Consider $\omega$ a kth root of unity in $F$. Then the matrices
$$B=\left[\begin{array}{cc} a & -a \\ a-\omega & \omega-a\end{array}\right] \  \ \wedge \  \ C=\left[\begin{array}{cc} a & a \\ \omega-a & \omega-a\end{array}\right],$$
satisfy the equations $X^k\neq X$ and $X^{k+1}=X$. Then both are $(k+1)$-potent matrices.
\end{lemma}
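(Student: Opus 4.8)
The plan is to verify the two claimed identities directly by exploiting the special structure of these matrices, namely that each is a rank-one-type perturbation built so that its square is a scalar multiple of a projection-like matrix. First I would compute $B^2$. Writing $B = \begin{bmatrix} a & -a \\ a-\omega & \omega-a \end{bmatrix}$, one checks that the rows (and columns) are proportional up to sign, so $B$ has rank $1$ whenever $a \neq 0$ and $a \neq \omega$; the trace is $tr(B) = a + (\omega - a) = \omega$, and the determinant is $a(\omega-a) - (-a)(a-\omega) = a(\omega-a) + a(a-\omega) = 0$. Hence by Cayley--Hamilton $B$ satisfies $B^2 - tr(B)\,B + \det(B)\,I = 0$, i.e. $B^2 = \omega B$. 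The same computation applies verbatim to $C = \begin{bmatrix} a & a \\ \omega - a & \omega - a\end{bmatrix}$: again $tr(C) = \omega$, $\det(C) = a(\omega-a) - a(\omega-a) = 0$, so $C^2 = \omega C$.

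Once $B^2 = \omega B$ is established, the rest is a one-line induction: $B^j = \omega^{j-1} B$ for all $j \geq 1$, proved by induction on $j$ using $B^{j+1} = B^j B = \omega^{j-1} B^2 = \omega^{j-1}\omega B = \omega^j B$. Taking $j = k$ and using $\omega^k = 1$ gives $B^{k+1} = \omega^k B = B$, which is the identity $X^{k+1} = X$. For the inequality $B^k \neq B$: since $B^k = \omega^{k-1} B$ and $\omega \neq 1$ is a primitive-or-not $k$th root of unity, $\omega^{k-1} \neq 1$ (indeed $\omega^{k-1} = \omega^{-1} \neq 1$), so $B^k = \omega^{k-1} B \neq B$ provided $B \neq 0$; and $B \neq 0$ holds as long as $a$ and $\omega - a$ are not both zero, which is automatic since $\omega \neq 0$. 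The identical argument gives $C^k = \omega^{k-1} C \neq C$ and $C^{k+1} = C$.

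The main (and essentially only) obstacle is bookkeeping: making sure the degenerate parameter choices are excluded or harmless. Specifically one should note that the conclusion "$B$ is $(k+1)$-potent" in the sense of the paper requires $B^{k+1} = B$ together with $B^k \neq B$, and the latter can fail only if $B = 0$, which never happens here because $tr(B) = \omega \neq 0$. So I would state at the outset that $a$ is an arbitrary element of $F$ (no restriction needed for the two polynomial identities, and the genericity is used only implicitly). No deep input is required — just Cayley--Hamilton in dimension two, or equivalently a direct $2\times 2$ multiplication — so this is a short computational lemma whose only subtlety is recording why $B$ and $C$ are nonzero.
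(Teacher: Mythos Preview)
Your proof is correct and follows essentially the same route as the paper: both reduce to the identity $B^j=\omega^{j-1}B$ (and likewise for $C$), from which $B^{k+1}=B$ and $B^k\neq B$ follow immediately using $\omega^k=1$, $\omega\neq 1$, and $B\neq 0$. The paper simply writes down the closed form of $B^i$ entrywise, whereas you obtain $B^2=\omega B$ from $\operatorname{tr}(B)=\omega$, $\det(B)=0$ via Cayley--Hamilton and then induct; this is a tidy shortcut but not a different argument.
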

\begin{proof}
Its no difficult proof that, for all $i<k$:
$$B^{i}=\left[\begin{array}{cc} a\omega^{i-1} & -a\omega^{i-1} \\ a\omega^{i-1}-\omega^i & \omega^i-a\omega^{i-1}\end{array}\right].$$
And for $C$ we have an identity similar.

\end{proof}
\begin{lemma}\label{lema2}
If $x \in F$, where $F$ is a field, then the matrix $$A= \left[\begin{array}{cc} x & 0 \\ 0 & 2\omega-x\end{array}\right]$$
is the sum of two $(k+1)$-potent matrices.
\end{lemma}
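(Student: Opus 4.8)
The plan is to write $A$ explicitly as a sum of two matrices that are each similar (in fact equal after conjugation) to one of the building blocks already constructed in Lemma \ref{lema1}, or more directly to exhibit two $(k+1)$-potent $2\times 2$ matrices whose sum is $\operatorname{diag}(x, 2\omega - x)$. The natural first step is to look for the summands in the form $B$ and $A - B$, where $B$ has a free parameter, and to impose that both $B$ and $A-B$ satisfy $X^{k+1}=X$ but $X^k \neq X$.

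The key observation is that a $2\times 2$ matrix $X$ with $X^{k+1}=X$ and $X^k\neq X$ must (by Lemma \ref{lemapoli}, or directly by looking at its eigenvalues among $0,1,\omega,\dots,\omega^{k-1}$) have eigenvalues $0$ and $\omega^j$ for some $j$ with $1\le j\le k-1$; in the simplest case its eigenvalues are $0$ and $\omega$, so $\operatorname{tr}(X)=\omega$ and $\det(X)=0$. So I would set
$$
B=\left[\begin{array}{cc} a & b \\ c & \omega - a\end{array}\right],\qquad A - B = \left[\begin{array}{cc} x - a & -b \\ -c & 2\omega - x - \omega + a\end{array}\right]=\left[\begin{array}{cc} x-a & -b \\ -c & \omega - (x-a)\end{array}\right].
$$
Both matrices then automatically have trace $\omega$; requiring $\det B = 0$ and $\det(A-B)=0$ gives $a(\omega-a)=bc$ and $(x-a)(\omega-(x-a))=bc$. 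Choosing, for instance, $b=c$ and picking $a$ so that $a(\omega - a) = (x-a)(\omega-(x-a))$ — which holds when $a = x - a$, i.e. $a = x/2$, giving $bc = (x/2)(\omega - x/2)$ — one sees this is exactly an instance of the matrices $B$ or $C$ of Lemma \ref{lema1} after scaling, so $(k+1)$-potency is inherited. One still has to check the strict inequality $X^k\neq X$, i.e. that the nonzero eigenvalue is a primitive enough root that $\omega^k = 1$ but $\omega^{k-1}\neq\text{(nonzero eigenvalue)}^{-1}\cdot\text{(nonzero eigenvalue)}$ — concretely that $X$ is not idempotent, which follows since the nonzero eigenvalue is $\omega\neq 1$; Lemma \ref{lema1} already records $B^k\neq B$.

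The main obstacle, and the point to handle with care, is the degenerate case: if $x=0$ or $x = 2\omega$, or more generally if the choice $a = x/2$ forces $bc$ to vanish while the off-diagonal structure degenerates, one of the candidate summands may fail to have the right rank or may accidentally become the zero matrix rather than a genuine $(k+1)$-potent matrix of rank one. In those cases I would instead fall back on the second building block $C$ of Lemma \ref{lema1}, or split $\operatorname{diag}(x,2\omega-x) = \operatorname{diag}(x,\omega) + \operatorname{diag}(0,\omega-x)$ after a conjugation bringing each diagonal summand into the Lemma \ref{lema1} form (a rank-one matrix with trace $\omega$ and determinant $0$ is similar to $B$). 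So the proof reduces to: (i) write the explicit two-term decomposition with $a=x/2$, $b=c$ chosen from $bc=(x/2)(\omega-x/2)$; (ii) invoke Lemma \ref{lema1} to conclude each summand is $(k+1)$-potent; (iii) dispose of the finitely many degenerate values of $x$ by an alternative splitting through similarity to the Lemma \ref{lema1} normal forms.
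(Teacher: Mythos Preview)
Your approach is essentially the paper's: the paper simply takes the two explicit matrices $B$ and $C$ of Lemma~\ref{lema1} with the common parameter $a=x/2$ and observes that $B+C=\operatorname{diag}(x,2\omega-x)$; the off-diagonals cancel in pairs ($-a+a=0$ and $(a-\omega)+(\omega-a)=0$) and the diagonals add to $x$ and $2\omega-x$. That is the whole argument.

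Two places where you make life harder than necessary. First, your side-choice $b=c$ with $b^{2}=(x/2)(\omega-x/2)$ requires a square root in $F$, which need not exist; you do not actually need this, since Lemma~\ref{lema1} already hands you the correct off-diagonal entries ($b=-a$, $c=a-\omega$ for one summand and $b=a$, $c=\omega-a$ for the other), so drop the $b=c$ ansatz and quote Lemma~\ref{lema1} directly. Second, your step (iii) is superfluous: Lemma~\ref{lema1} asserts $(k+1)$-potency of $B$ and $C$ for \emph{every} $a\in F$, including $a=0$ and $a=\omega$, so there are no degenerate values of $x$ to dispose of. Once you strip those detours, your proof is exactly the paper's.
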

\begin{proof}
By the Lemma \ref{lema1} is no difficult see that the matrix $A$ is the sum of matrices $B$ and $C$ with $a=\displaystyle\frac{x}{2}.$
\end{proof}

\begin{lemma}\label{lema3} Let $A$ an matrix and $\omega$ a non-trivial kth root of unity, then $(\omega I+A)\oplus (\omega I-A)$ is a sum of two $(k+1)$-potent matrices.
\end{lemma}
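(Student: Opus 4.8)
The plan is to reduce the direct sum $(\omega I + A) \oplus (\omega I - A)$ to the situation already handled in Lemma \ref{lema2}, by diagonalizing (or at least block-diagonalizing) the $2\times 2$ blocks that pair up the two summands. First I would observe that
$$(\omega I + A) \oplus (\omega I - A) = \left[\begin{array}{cc} \omega I + A & 0 \\ 0 & \omega I - A\end{array}\right]$$
acts on $H \oplus H$, and the key move is to conjugate by the block matrix
$$U = \frac{1}{\sqrt{2}}\left[\begin{array}{cc} I & I \\ I & -I\end{array}\right],$$
which is (up to the scalar) an involution; under this conjugation the off-diagonal $\pm A$ pieces move onto the diagonal blocks and the $\omega I$ pieces move to the off-diagonal, producing a matrix of the shape $\left[\begin{smallmatrix} \omega I & A \\ A & \omega I\end{smallmatrix}\right]$ — or, with a slightly different choice of $U$, the shape $\left[\begin{smallmatrix} A & \omega I \\ \omega I & A\end{smallmatrix}\right]$. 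Either way, after a permutation similarity that groups the $i$-th coordinate of the first $H$ with the $i$-th coordinate of the second $H$, the whole matrix becomes a direct sum of $2\times 2$ blocks, each of which (after a further similarity, e.g. diagonalizing $A$ or handling it in Jordan form coordinate-pair by coordinate-pair) has the form $\left[\begin{smallmatrix} x & 0 \\ 0 & 2\omega - x\end{smallmatrix}\right]$ appearing in Lemma \ref{lema2}, possibly up to adding a strictly upper-triangular nilpotent correction that can be absorbed into one of the two $(k+1)$-potent summands as in the proof of Lemma \ref{lema1}.

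The steps, in order, would be: (i) write down the conjugating matrix $U$ and verify that $U^{-1}\big((\omega I+A)\oplus(\omega I-A)\big)U$ has $\omega I$ on (say) the anti-diagonal blocks and $A$ on the diagonal blocks; (ii) apply the coordinate-pairing permutation to break this into a block-diagonal matrix whose blocks are $2\times 2$ (working in a basis adapted to the Jordan form of $A$, so that each block is upper triangular with diagonal entries $\lambda$ and $\lambda$, and $\omega$ in the anti-diagonal slots — here one should double-check the exact entry pattern); (iii) invoke Lemma \ref{lema2} on each diagonalizable $2\times 2$ block $\left[\begin{smallmatrix} \lambda & 0 \\ 0 & 2\omega-\lambda\end{smallmatrix}\right]$ to write it as a sum of the two $(k+1)$-potent matrices $B$ and $C$ of Lemma \ref{lema1}; (iv) collect the $B$-summands across all blocks into one $(k+1)$-potent matrix and the $C$-summands into another, using that a block-diagonal matrix all of whose blocks satisfy $X^{k+1}=X$ itself satisfies $X^{k+1}=X$; (v) undo the similarities, noting that $(k+1)$-potency is preserved under similarity. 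A cleaner variant of (ii)–(iii), which I would try first, is to avoid Jordan form entirely: conjugate so the matrix is $\left[\begin{smallmatrix} \omega I & A \\ A & \omega I \end{smallmatrix}\right]$, then notice $\left[\begin{smallmatrix} \omega I & A \\ A & \omega I \end{smallmatrix}\right] = \left[\begin{smallmatrix} \tfrac{\omega}{2} I & B_0 \\ C_0 & \tfrac{\omega}{2}I\end{smallmatrix}\right] + \left[\begin{smallmatrix} \tfrac{\omega}{2}I & C_0 \\ B_0 & \tfrac{\omega}{2}I\end{smallmatrix}\right]$ where $B_0, C_0$ are chosen with $B_0 + C_0 = A$ and each resulting summand is arranged, blockwise, to mimic the $2\times 2$ matrices $B$ and $C$ of Lemma \ref{lema1} with the scalar $a$ replaced by the matrix $\tfrac{1}{2}A$; one then verifies directly, exactly as in Lemma \ref{lema1}, that $X^{i}$ has the predicted form for $i<k$ and $X^{k+1}=X$, since all the relevant blocks are scalar multiples of $I$ and $A$ and hence commute.

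The main obstacle I anticipate is step (ii)/(iii): making sure that after the conjugation by $U$ and the coordinate-pairing permutation, the resulting $2\times 2$ blocks genuinely match the hypothesis of Lemma \ref{lema2} — i.e. that the diagonal entries are $x$ and $2\omega - x$ with $\omega$ (not $\omega^2$ or some other power) sitting in the right slots, and that any nilpotent part of $A$ does not spoil the $(k+1)$-potency when the summands are recombined. This is why I favour the matrix-block version of Lemma \ref{lema1}: by keeping $A$ intact as a matrix rather than diagonalizing it, the computation $X^{k+1}=X$ goes through verbatim from the scalar case because $\omega I$ and $A$ commute, so the only thing to check is the algebraic identity $B^i = \left[\begin{smallmatrix} \tfrac{\omega^{i-1}}{2^{?}}A & \cdots \\ \cdots & \cdots\end{smallmatrix}\right]$ at the block level, which is the same induction as in Lemma \ref{lema1}. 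Everything else — preservation under similarity, block-diagonal recombination — is routine.
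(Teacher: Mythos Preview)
Your detour through the conjugation $U$ is unnecessary and, in the form you wrote it, leads you astray. The paper's argument is a single substitution: set $X=\omega I+A$, observe that $\omega I-A=2\omega I-X$, so
\[
(\omega I+A)\oplus(\omega I-A)=\left[\begin{array}{cc}X & 0\\ 0 & 2\omega I-X\end{array}\right],
\]
and then apply Lemma~\ref{lema2} with the scalar $x$ replaced by the matrix $X$. The verification of Lemma~\ref{lema1} goes through verbatim because all blocks are polynomials in $X$ and $\omega I$, which commute --- exactly the observation you make at the end of your ``cleaner variant''. No conjugation, no Jordan form, no coordinate pairing.

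The concrete gap in your write-up is this: after conjugating to $\left[\begin{smallmatrix}\omega I & A\\ A & \omega I\end{smallmatrix}\right]$, the summands you propose, with diagonal blocks $\tfrac{\omega}{2}I$, do \emph{not} match the shape of the matrices $B,C$ in Lemma~\ref{lema1} (whose diagonal entries are $a$ and $\omega-a$, not equal scalars), so the induction ``exactly as in Lemma~\ref{lema1}'' does not apply to them. And if instead you substitute $a\mapsto\tfrac12 A$ into the block versions of $B$ and $C$, their sum is $\left[\begin{smallmatrix}A & 0\\ 0 & 2\omega I-A\end{smallmatrix}\right]$, which is neither the conjugated matrix nor the original direct sum. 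The correct substitution is $a\mapsto\tfrac12(\omega I+A)$, applied to the \emph{unconjugated} direct sum --- and that is precisely the paper's one-line proof. Your Jordan-form route (steps (ii)--(iii)) also just unwinds the conjugation back to $\operatorname{diag}(\omega+\lambda_i,\,\omega-\lambda_i)$ in the diagonalizable case, and the ``nilpotent correction'' you allude to for the general case is not justified.
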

\begin{proof}
Consider $X=\omega I+A$ then 
$$(\omega I+A)\oplus (\omega I-A)=\left[\begin{array}{cc}X & 0 \\ 0 & 2\omega I-X\end{array}\right]$$
and the result follows from Lemma \ref{lema2}.
\end{proof}
With the above lemmas we obtain the following proposition.
\begin{proposition}\label{propo1} Consider $A$ an matrix $n\times n$, $n\geq 2$ with $tr(A)=\alpha\omega^j$ for some $\alpha>3$ an integer number, $\omega$ an non-trivial kth root of unity and $1<j<k$. Suppose that $rank(A)=1$, then \begin{itemize}
    \item [1.)] If $\alpha$ is an even number, then $A$ is a sum of $\displaystyle\frac{\alpha-2}{2}+2$,  $(k+1)$-potent matrices.
    \item[2.)] If $\alpha$ is an odd number, then $A$ is a sum of $\displaystyle\frac{\alpha-3}{2}+3$,  $(k+1)$-potent matrices.
    \end{itemize}
\end{proposition}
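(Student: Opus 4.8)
The plan is to normalize $A$ by similarity and then build the required sum out of three very simple kinds of $(k+1)$-potent matrix. Put $\mu:=\omega^{j}$; since $\omega^{k}=1$ we have $\mu^{k}=1$, so $\mu$ is again a $k$th root of unity and Lemma~\ref{lema2} applies with $\mu$ in place of $\omega$. Because $\operatorname{rank}(A)=1$ and $\operatorname{tr}(A)=\alpha\mu\neq 0$, the matrix $A$ has $\alpha\mu$ as a simple eigenvalue and $0$ as an eigenvalue of geometric multiplicity $n-1$ (its kernel), so it is diagonalizable and similar to $D:=\operatorname{diag}(\alpha\mu,0,\dots ,0)$. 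Since a conjugate of a $(k+1)$-potent matrix is again $(k+1)$-potent, it suffices to write $D$ as a sum of the claimed number of $(k+1)$-potent matrices.

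I would use the following blocks. (i) $\mu I_{2}\oplus 0_{n-2}$ is $(k+1)$-potent, since its $(k+1)$th power equals $\mu^{k+1}I_{2}\oplus 0_{n-2}=\mu I_{2}\oplus 0_{n-2}$; crucially it is \emph{not} $k$-potent, which is what allows a single such block to carry trace $2\mu$. (ii) For the same reason $\mu\oplus 0_{n-1}=\operatorname{diag}(\mu,0,\dots ,0)$ is $(k+1)$-potent. (iii) For any scalar $x$, Lemma~\ref{lema2} writes $\operatorname{diag}(x,2\mu-x)$ as a sum of two $(k+1)$-potent $2\times 2$ matrices $P,Q$, whence $(P\oplus 0_{n-2})+(Q\oplus 0_{n-2})$ presents $\operatorname{diag}(x,2\mu-x)\oplus 0_{n-2}$ as a sum of two $(k+1)$-potent $n\times n$ matrices.

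I would then split on the parity of $\alpha$. For even $\alpha$ (so $\alpha\ge 4$), set $t:=\tfrac{\alpha-2}{2}\in\mathbb{Z}_{>0}$ and check the identity
$$D=\sum_{i=1}^{t}\bigl(\mu I_{2}\oplus 0_{n-2}\bigr)+\Bigl(\operatorname{diag}\bigl(\tfrac{\alpha+2}{2}\mu,\;-\tfrac{\alpha-2}{2}\mu\bigr)\oplus 0_{n-2}\Bigr),$$
valid because $t\mu+\tfrac{\alpha+2}{2}\mu=\alpha\mu$ and $t\mu-\tfrac{\alpha-2}{2}\mu=0$; since $2\mu-\tfrac{\alpha+2}{2}\mu=-\tfrac{\alpha-2}{2}\mu$, the last block is of the form in (iii), and this exhibits $D$ as a sum of $t+2=\tfrac{\alpha-2}{2}+2$ such matrices. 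For odd $\alpha$ (so $\alpha\ge 5$), set $s:=\tfrac{\alpha-3}{2}\in\mathbb{Z}_{\ge 0}$ and check
$$D=\sum_{i=1}^{s}\bigl(\mu I_{2}\oplus 0_{n-2}\bigr)+\bigl(\mu\oplus 0_{n-1}\bigr)+\Bigl(\operatorname{diag}\bigl(\tfrac{\alpha+1}{2}\mu,\;-\tfrac{\alpha-3}{2}\mu\bigr)\oplus 0_{n-2}\Bigr);$$
the first two groups sum to $\operatorname{diag}\bigl(\tfrac{\alpha-1}{2}\mu,\tfrac{\alpha-3}{2}\mu,0,\dots ,0\bigr)$, and then $\tfrac{\alpha-1}{2}\mu+\tfrac{\alpha+1}{2}\mu=\alpha\mu$, $\tfrac{\alpha-3}{2}\mu-\tfrac{\alpha-3}{2}\mu=0$, while $2\mu-\tfrac{\alpha+1}{2}\mu=-\tfrac{\alpha-3}{2}\mu$ puts the last block into the form of (iii); this gives $s+1+2=\tfrac{\alpha-3}{2}+3$ matrices.

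I do not anticipate a genuine obstacle. The entire content is the remark that $\mu I_{2}$ is $(k+1)$-potent (though not $k$-potent) and has trace $2\mu$, so $\tfrac{\alpha-2}{2}$ (respectively $\tfrac{\alpha-3}{2}$) copies of it absorb the bulk of $\alpha\mu$, leaving a trace-$2\mu$ tail that Lemma~\ref{lema2} disposes of in two matrices. The only point requiring care is the bookkeeping: one must keep $t$ and $s$ integral in their respective parities and arrange the leftover block to be exactly $\operatorname{diag}(x,2\mu-x)\oplus 0_{n-2}$ so that Lemma~\ref{lema2} applies, and this is where the hypothesis $n\ge 2$ enters — it provides the room for the $2\times 2$ blocks.
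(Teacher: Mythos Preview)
Your proposal is correct and follows essentially the same route as the paper's proof. Both arguments diagonalize the rank-$1$ matrix to $\operatorname{diag}(\alpha\omega^{j},0,\dots,0)$, peel off $\tfrac{\alpha-2}{2}$ copies of $\omega^{j}I_{2}\oplus 0_{n-2}$ in the even case, and then invoke Lemma~\ref{lema2} (via Lemma~\ref{lema3}) on the remaining block $\operatorname{diag}\bigl(\tfrac{\alpha+2}{2}\omega^{j},-\tfrac{\alpha-2}{2}\omega^{j}\bigr)\oplus 0_{n-2}$; for odd $\alpha$ the paper splits off $\operatorname{diag}(\omega^{j},0,\dots,0)$ and reduces to the even case, which when unfolded is exactly your direct decomposition with $s=\tfrac{\alpha-3}{2}$.
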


\begin{proof}
In the case that $\alpha$ is an even number, then $A$ is similar to 

$$\left[\begin{array}{cccc}\alpha\omega^j &  & & 0  \\  & 0 & &  \\
 & & \ddots &  \\
 0 & & & 0\end{array}\right]$$
 Then, we have
 
 $$\left[\begin{array}{cccc}\alpha\omega^j &  & & 0  \\  & 0 & &  \\
 & & \ddots &  \\
 0 & & & 0\end{array}\right]=\left[\begin{array}{cccc}\beta\omega^j &  & & 0  \\  & \beta\omega^j & &  \\
 & & \ddots &  \\
 0 & & & 0\end{array}\right]+\left[\begin{array}{cccc}(\alpha-\beta)\omega^j &  & & 0  \\  & -\beta\omega^j & &  \\
 & & \ddots &  \\
 0 & & & 0\end{array}\right],$$
 where $\beta=\displaystyle\frac{\alpha-2}{2}$. The first matrix is a sum  of $\beta$, $(k+1)$-potent matrices, and by the Lemma \ref{lema3}, the second matrix is a sum of two $(k+1)$-potent matrices. In the case that $\alpha$ is an odd number we can observe that
 
 $$\left[\begin{array}{cccc}\alpha\omega^j &  & & 0  \\  & 0 & &  \\
 & & \ddots &  \\
 0 & & & 0\end{array}\right]=\left[\begin{array}{cccc}\omega^j &  & & 0  \\  & 0 & &  \\
 & & \ddots &  \\
 0 & & & 0\end{array}\right]+\left[\begin{array}{cccc}(\alpha-1)\omega^j &  & & 0  \\  & 0 & &  \\
 & & \ddots &  \\
 0 & & & 0\end{array}\right],$$
 where the first matrix is $(k+1)$-potent and, by (1), the second matrix is a sum of $\displaystyle\frac{\alpha-3}{2}+2$,  $(k+1)$-potent matrices.

\end{proof}

In the geral case, we can enunciated the following proposition

\begin{proposition}\label{propogeral}
Consider $A$ an matrix $n\times n$, $n\geq 2$, $\omega$ an non-trivial kth root of unity and 
$tr(A)=\alpha_0+\alpha_1\omega+\alpha_2\omega^2+\cdots +\alpha_{k-1}\omega^{k-1}$ with $\alpha_i>3$ for all $i=0,1,\cdots,k-1$. If $rank(A)=k$  then $A$ is a sum of $(\sum^{k-1}_{i=0}\gamma_i+2)k$,  $(k+1)$-potent matrices where $$\gamma_i=\left\{\begin{array}{ll}\displaystyle\frac{\alpha_i-2}{2} &,  \ \textrm{if}\ \alpha_i \textrm{ is an even number}, \\
\displaystyle\frac{\alpha_i-3}{2}+3 &,\  \textrm{if} \ \alpha_i  \textrm{ is an odd number.}
\end{array}\right.$$
\end{proposition}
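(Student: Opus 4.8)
The plan is to reduce Proposition \ref{propogeral} to Proposition \ref{propo1} by splitting the matrix $A$ according to the "types" of eigenvalues occurring in its trace, just as Theorem \ref{th4} was reduced to Theorem \ref{th1}. First I would use the hypothesis $rank(A)=k$ to write $A$, up to unitary equivalence, in the block form $\left[\begin{smallmatrix} A_1 & 0 \\ A_2 & 0\end{smallmatrix}\right]$ with $A_1$ of size $k\times k$, and then (via the Fillmore-type argument already invoked in the proofs of Theorems \ref{th1} and \ref{th4}) arrange that $A_1$ is similar to a matrix $B$ whose diagonal blocks are $1\times 1$ scalars $\alpha_0, \alpha_1\omega, \dots, \alpha_{k-1}\omega^{k-1}$, one for each power of $\omega$ appearing in $tr(A)$. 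Since there are exactly $k$ such blocks and $rank(A)=k$, this is consistent: each of the $k$ diagonal entries of $B$ is nonzero, so each "column piece" of $C=\left[\begin{smallmatrix} B & 0 \\ * & 0\end{smallmatrix}\right]$ carries exactly one of the $\alpha_i\omega^i$.

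Next I would split $C$ (hence $A$, by similarity) as a sum of $k$ matrices $C_0, C_1, \dots, C_{k-1}$, where $C_i$ consists of the single column of $C$ sitting in the position of the block $\alpha_i\omega^i$ (this is the same column-splitting device used repeatedly above, e.g. in the proof of Theorem \ref{th1} and in Proposition \ref{propo1}). Each $C_i$ is then a matrix of rank $1$ whose trace is $\alpha_i\omega^i$. For $i\geq 1$ with $1<i<k$, Proposition \ref{propo1} applies directly and tells us $C_i$ is a sum of $\gamma_i+2$ $(k+1)$-potent matrices, where $\gamma_i$ is $(\alpha_i-2)/2$ or $(\alpha_i-3)/2+3$ according to parity — wait, the bookkeeping in the statement folds the "$+2$'' into the odd case but not the even one, so I would be careful to match the stated formula: in the even case the count is $\frac{\alpha_i-2}{2}+2 = \gamma_i+2$ and in the odd case it is $\frac{\alpha_i-3}{2}+3+2$, and the stated $\gamma_i$ in the odd case already includes the $+3$, so the uniform count per block is $\gamma_i+2$. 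Summing over $i=0,1,\dots,k-1$ gives the total $\sum_{i=0}^{k-1}(\gamma_i+2) = \left(\sum_{i=0}^{k-1}\gamma_i\right)+2k$, which rearranges to the claimed $\left(\sum_{i=0}^{k-1}\gamma_i+2\right)k$ only if one reads the displayed formula with that grouping; I would either adopt the natural reading $\sum(\gamma_i+2)$ or, to respect the statement verbatim, note that the per-block bound $\gamma_i+2$ can be uniformly padded (adding zero summands of the form of Lemma \ref{lema4} with $x=y=0$, or using the Remark) up to $\gamma_i+2 \leq \big(\sum_j \gamma_j + 2\big)$ wait that is wasteful; cleaner is simply to take the bound $\sum_{i=0}^{k-1}(\gamma_i+2)$ and observe $\sum_{i=0}^{k-1}(\gamma_i+2) \le \big(\sum_{i=0}^{k-1}\gamma_i + 2\big)k$ when $k\ge 1$ since each $\gamma_i\ge 0$, so the stated (weaker) bound follows a fortiori.

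The boundary cases $i=0$ and, if relevant, $i=1$ or $i=k-1$ need separate handling. For $i=0$, the block $\alpha_0 I_{1}$ is scalar with integer trace $\alpha_0$; by Wu \cite{Wu} such a rank-$1$ block (a single nonzero column with entry $\alpha_0$ on the diagonal) is a sum of $\alpha_0$ idempotents, hence of $\alpha_0$ $(k+1)$-potent matrices, and $\alpha_0 \le \gamma_0 + 2$ holds for $\alpha_0 > 3$ in both parities, so this block also fits under the per-block bound $\gamma_i+2$. The case $i=1$ (the eigenvalue $\alpha_1\omega$, first power) works exactly like $i\geq 2$ in Proposition \ref{propo1} — the argument there never really used $j>1$, only $j\ge 1$ so that $\omega^j\neq$ a genuine root; I would simply remark that Proposition \ref{propo1} and its proof are valid for $1\le j<k$. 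Assembling the $k$ pieces and summing the bounds, then transporting back through the similarity and unitary equivalence, yields the proposition.

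I expect the main obstacle to be purely organizational rather than deep: making the Fillmore/Thompson realization of $A_1$ as a block matrix with the prescribed $1\times 1$ diagonal blocks fully rigorous when some $\alpha_i\omega^i$ coincide in modulus or when the off-diagonal $*$-parts must be absorbed consistently across the $k$ column-splittings — essentially re-running the proof of Theorem \ref{th1} with the finer partition $r_0=r_1=\cdots=r_{k-1}=1$. The second, more cosmetic, hurdle is reconciling the arithmetic of the stated count $\big(\sum_i\gamma_i+2\big)k$ with the sharper $\sum_i(\gamma_i+2)$ that the construction actually produces; I would resolve this by noting the former dominates the latter (so the weaker claim is implied) and, if desired, flag the sharper bound in a remark.
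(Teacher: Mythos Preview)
Your proposal follows essentially the same approach as the paper: apply Fillmore's theorem to realize $A$ (up to similarity) with prescribed diagonal $\alpha_0,\alpha_1\omega,\ldots,\alpha_{k-1}\omega^{k-1},0,\ldots,0$, then invoke Proposition~\ref{propo1} on each of the $k$ resulting rank-one pieces. Your version is in fact more careful than the paper's one-sentence proof --- you explicitly carry out the column splitting, flag the boundary cases $j=0$ and $j=1$ that fall outside the literal hypotheses of Proposition~\ref{propo1}, and reconcile the sharper count $\sum_i(\gamma_i+2)$ with the stated bound $(\sum_i\gamma_i+2)k$, all of which the paper leaves implicit.
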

\begin{proof}
Consider $tr(A)=F(\omega)$ then $F(1)=\sum^{k-1}_{i=0}\alpha_i\geq rank(A)=k$, then by Fillmore in \cite{Fillmore}, $A$ is similar to

$$\left[\begin{array}{ccc}\left[\begin{array}{ccccc}\alpha_0 &  & & & 0  \\  & \alpha_1\omega & &  & \\
 & & \alpha_2\omega^2 & & \\
  & & & \ddots & \\ & & & & \alpha_{k-1}\omega^{k-1}\end{array}\right]_{k\times k} & & \\ & \ddots & \\ & & 0\end{array}\right]$$
And, using the Proposition \ref{propo1} in each element in the diagonal we concluded the proof.

\end{proof}

\begin{remark} Follows of the proof of Proposition \ref{propogeral} that if $tr(A)=\alpha_0+\alpha_1\omega+\cdots +\alpha_{k-1}\omega^{k-1}$ with $0<j<k-1$ of $\alpha_i^{\prime}s$ are zero and $k-j$ are $>3$. If $rank(A)=k-j$, then $A$ is a sum of $\displaystyle\sum^{k-1}_{i=0 \wedge \alpha_i\neq 0}\gamma_i(k-j)$, $(k+1)$-potent matrices. 
\end{remark}

Using the above proposition we can enunciated the more geral case.

\begin{proposition}\label{superpropo}
Consider $A$ an matrix $n\times n$, $ n\geq 2$, and suppose that

\begin{itemize}
    \item [1.)] There are $\omega_1,\omega_2,\cdots,\omega_n$ and $\beta_1,\beta_2,\cdots,\beta_n$ integer numbers with  $\omega_i\neq \omega_j$ and $\beta_i\neq \beta_j$ for $i\neq j$ such that $\omega_i$ is a $\beta_i th $ root of unity for $i=1,2,\cdots,n$.
    \item[2.)] There are polynomials $F_1(x),F_2(x),\cdots, F_n(x)$ with $1\leq degree(F_i) \leq \beta_i-1$ such that  $F_i(x)=\alpha_{i1}x+\alpha_{i2}x^2+\cdots+\alpha_{i(\beta_i-1)}x^{\beta_i-1}$ with all $\alpha_{ij}$ positive coefficients integers and $$tr(A)=a_0+F_1(\omega_i)+F_2(\omega_2)+\cdots + F_n(\omega_n)$$
    where $a_0$ is a positive integer number.
\end{itemize}
    If $rank(A)=1+\sum^{n}_{i=1}(\beta_i-1)$, then $A$ is a sum of $\left\{\gamma_0+\sum^{n}_{i=1}\sum^{\beta_{i-1}}_{j=1}(\gamma_{ij}+2)\right\}\times rank(A)$ many $\beta_1$, $\beta_2$, $ \cdots$, $\beta_n$-potent matrices where
    $$\gamma_0=\left\{\begin{array}{ll}\displaystyle\frac{\alpha_0-2}{2} &,  \ \textrm{if}\ \alpha_0 \textrm{ is an even number}, \\
\displaystyle\frac{\alpha_0-3}{2}+3 &,\  \textrm{if} \ \alpha_0  \textrm{ is an odd number.}
\end{array}\right.$$
and, for $i=i,2,\cdots,n$ and $j=1,2,\cdots, \beta_i$,
$$\gamma_{ij}=\left\{\begin{array}{ll}\displaystyle\frac{\alpha_{ij}-2}{2} &,  \ \textrm{if}\ \alpha_{ij} \textrm{ is an even number}, \\
\displaystyle\frac{\alpha_{ij}-3}{2}+3 &,\  \textrm{if} \ \alpha_{ij}  \textrm{ is an odd number.}
\end{array}\right.$$
\end{proposition}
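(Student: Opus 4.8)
The plan is to follow the pattern of Propositions \ref{propo1} and \ref{propogeral}: reduce $A$, via Fillmore's theorem, to a matrix built out of $rank(A)$ rank-one blocks whose traces are the prescribed monomials, and then clear each block by one application of Proposition \ref{propo1} (or, for the block whose root of unity is $1$, of Wu's theorem). I take each $\omega_i$ to be a primitive $\beta_i$th root of unity and the coefficients $a_0,\alpha_{ij}$ to be $>3$, as is implicit in the hypotheses and in Propositions \ref{propo1}--\ref{propogeral}. Put $r=rank(A)=1+\sum_{i=1}^{n}(\beta_i-1)$ and list the $r$ complex numbers
$$d_1=a_0,\qquad \bigl\{\ \alpha_{ij}\,\omega_i^{\,j}\ :\ 1\le i\le n,\ 1\le j\le\beta_i-1\ \bigr\},$$
each of which is nonzero and whose sum is $a_0+\sum_{i=1}^{n}F_i(\omega_i)=tr(A)$; note that every exponent $j$ occurring above satisfies $1\le j\le\beta_i-1$, so $\omega_i^{\,j}$ is a nontrivial $\beta_i$th root of unity. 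As in the proof of Theorem \ref{th1}, $A$ is unitarily equivalent to $\left[\begin{array}{cc}A_1&0\\A_2&0\end{array}\right]$ with $A_1$ of size $r\times r$ and $tr(A_1)=tr(A)$; assuming, and peeling off the scalar case exactly as in that proof, that $A_1$ is non-scalar, Theorem 2 of \cite{Fillmore} realizes $A_1$ as similar to a matrix with diagonal $(d_1,\dots,d_r)$. Hence $A$ is similar to an $n\times n$ matrix $C$ with diagonal $(d_1,\dots,d_r,0,\dots,0)$ whose last $n-r$ columns vanish.

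Next I would write $C=N_1+\dots+N_r$, where $N_p$ retains the $p$th column of $C$ and is zero elsewhere. Each $N_p$ has rank one and trace $d_p\neq 0$, so $N_p$ is similar to $\mathrm{diag}(d_p,0,\dots,0)$; since conjugation preserves $(k+1)$-potency for every $k$, it is enough to decompose each $\mathrm{diag}(d_p,0,\dots,0)$ and conjugate back. For $d_p=a_0$ I would run the parity argument from the proof of Proposition \ref{propo1} with the root of unity $1$: for even $a_0$, split $\mathrm{diag}(a_0,0,\dots,0)$ into $\gamma_0$ copies of the idempotent $\mathrm{diag}(1,1,0,\dots,0)$ together with $\mathrm{diag}(\gamma_0+2,-\gamma_0,0,\dots,0)$, the latter being a sum of two idempotents by \cite{Wu}; for odd $a_0$, first remove a single copy of $\mathrm{diag}(1,0,\dots,0)$ and reduce to the even case. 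For $d_p=\alpha_{ij}\omega_i^{\,j}$ I would apply Proposition \ref{propo1} directly, with the nontrivial $\beta_i$th root of unity $\omega_i^{\,j}$ in the role of $\omega$ there: its hypotheses $\alpha_{ij}>3$ and $\omega_i^{\,j}\neq 1$ are exactly what is available, and its argument — splitting off $\gamma_{ij}$ copies of the $(\beta_i+1)$-potent diagonal matrix $\mathrm{diag}(\omega_i^{\,j},\omega_i^{\,j},0,\dots,0)$ and disposing of the remainder by Lemma \ref{lema3} — never uses the restriction $j>1$, so the exponent $j=1$ occurring in the sum causes no difficulty. This writes $\mathrm{diag}(\alpha_{ij}\omega_i^{\,j},0,\dots,0)$ as a sum of $\gamma_{ij}+2$ of the required $(\beta_i+1)$-potent matrices.

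Finally I would transfer these decompositions back through $A\sim C=\sum_{p=1}^{r}N_p$ and collect the counts: the $a_0$-block costs at most $\gamma_0+2$ idempotents, each of the remaining $\sum_{i=1}^{n}(\beta_i-1)$ blocks costs at most $\gamma_{ij}+2$ of the $(\beta_i+1)$-potent matrices, and summing these over the $rank(A)$ rank-one blocks (padding with zero matrices if an exact count is wanted) yields the estimate recorded in the statement. I expect the only genuinely delicate step to be the reduction in the first paragraph — verifying that the prescribed traces $d_1,\dots,d_r$ sum to $tr(A)$ and number exactly $rank(A)$, and that $A_1$ is non-scalar so that Fillmore's theorem applies (the degenerate scalar case being handled as in the proof of Theorem \ref{th1}). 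Everything afterwards is bookkeeping: pairing each $d_p$ with its $\gamma$ and quoting Proposition \ref{propo1}, Lemma \ref{lema3}, and \cite{Wu}.
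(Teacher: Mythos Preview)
Your approach is correct and coincides with the paper's. The paper's proof is the single line ``Follows immediately from Theorem \ref{th4} and the Proposition \ref{propogeral}'', and what you have written is exactly the unpacking of those two references: the Fillmore reduction to a matrix with the prescribed diagonal $(a_0,\alpha_{11}\omega_1,\dots)$, the splitting into rank-one column pieces, and the block-by-block invocation of Proposition \ref{propo1} (with the idempotent case handled via \cite{Wu}). Your remark that Proposition \ref{propo1} works for $j=1$ despite the stated restriction $1<j<k$ is also correct, since Lemma \ref{lema3} only needs the root of unity to be nontrivial.
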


\begin{proof}
Follows immediately from Theorem \ref{th4} and the Proposition \ref{propogeral}.
\end{proof}

Now, we can observe the following situations.

\begin{example}\label{ex1}
Consider $A$ a complex matrix $3\times 3$ such that $Trace(A)=4-6\omega+10\omega^2$ with $\omega$ a $3$th root of unity. Suppose that $rank(A)=3$ then we can expressed $A$ as a sums and differences of $4$-potent matrices. This is possible because $A$ is similar to 
$$B=\left[\begin{array}{ccc} 4 & & * \\ & -6\omega &  \\ * & & 10\omega\end{array}\right]$$
and
$$B=4\left[\begin{array}{ccc}1 &0 & 0 \\ * & 0& 0  \\ * &0 &0 \end{array}\right]-6\left[\begin{array}{ccc}0& * & 0 \\ 0& \omega & 0 \\ 0 &* & 0\end{array}\right]+10\left[\begin{array}{ccc}0 &0 & * \\0 & 0& *  \\ 0 & 0 & \omega\end{array}\right].$$
Then we can conclude the our affirmation since the first matrix is idempotent and the second and third matrices are $4$-potent
\end{example}

\begin{example}
If in the Example \ref{ex1} we suppose that $Trace(A)=4-\displaystyle\frac{9}{2}\omega+\displaystyle\frac{25}{4}\omega^2$ then we obtain that $B$ is a combination linear of one idempotent and two $4$-potent matrices.
\end{example}

Using the above example we can easily extend all results for matrices with all coefficients that appears in their respective trace no necessarily positives. First, consider $F(x)=\alpha_0+\alpha_1 x+\cdots \alpha_n x^n $ an polynomial with non-zero coefficients, define the module of polynomial $F$ as $|F|=|a_0|+|a_1|+\cdots +|a_n|=\sum^n_{i=0}|a_i| $ and the number  $Min(F)=\min{\{|a_j|\}}_{j=0}^n$

\begin{corollary}\label{corolgeral}
Let $A$ be a complex square matrix and suppose that
\begin{itemize}
    \item [1.)] There are $\omega_1,\omega_2,\cdots,\omega_n$ and $\beta_1,\beta_2,\cdots,\beta_n$ integer numbers with  $\omega_i\neq \omega_j$ and $\beta_i\neq \beta_j$ for $i\neq j$ such that $\omega_i$ is a $\beta_ith $ root of unity for $i=1,2,\cdots,n$.
    \item[2.)] There are polynomials $F_1(x),F_2(x),\cdots, F_n(x)$ with non-zero coefficients, $Min(F_i)\geq 1$  and $1\leq degree(F_i) \leq \beta_i-1$ for $i=1,2\cdots,n$, such that $$tr(A)=a_0+F_1(\omega_i)+F_2(\omega_2)+\cdots + F_n(\omega_n)$$
    where $a_0$ satisfies $|a_0|\geq 1$.
    \end{itemize}
    Then, $A$ is a linear combination of finitely many $\beta_1+1,\beta_2+1,\cdots,\beta_n+1$-potent matrices if and only if $$|a_0|+|F_1|+|F_2|+\cdots+|F_n|\geq rank(A) $$
\end{corollary}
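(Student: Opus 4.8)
The plan is to reduce the statement to Theorem \ref{th4} (and the refinements in Propositions \ref{propo1}--\ref{superpropo}) by absorbing the signs of the coefficients into the well-known identity $1+\omega+\cdots+\omega^{m-1}=0$ valid for an $m$th root of unity $\omega\neq 1$. The key observation is that if $\omega$ is a $(\beta+1)$th root of unity with $\beta$ \emph{even}, so that $k=\beta+1$ is odd... wait: more precisely, since we are dealing with $\beta_i$th roots of unity in the sense of the earlier sections (where the exponent in ``$(k+1)$-potent'' is one more than the order of the root), the relation $-\omega^j = \omega^j(\omega+\omega^2+\cdots+\omega^{\beta_i-1}) \cdot(\text{suitable reindexing})$ lets one rewrite a \emph{negative} multiple $-|a|\,\omega^j$ of a root of unity as a \emph{positive} integer combination of other powers $\omega^0,\omega^1,\dots,\omega^{\beta_i-1}$, at the cost of introducing finitely many extra $(\beta_i+1)$-potent summands (exactly as in the Remark following Lemma \ref{lema4}, where $I_n$ itself is written as a nonnegative combination). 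First I would make this rewriting precise: for each $i$ and each monomial $-\alpha_{ij}\omega_i^{\,j}$ appearing with a negative sign in $F_i$, replace it using $-\omega_i^{\,j}=\sum_{\ell\neq j,\ 0\le \ell\le \beta_i-1}\omega_i^{\,\ell}$ (reading indices mod $\beta_i$ and using $\omega_i^{\beta_i}=1$), thereby producing a polynomial $\widetilde F_i(x)$ with \emph{nonnegative} integer coefficients such that $\widetilde F_i(\omega_i)=F_i(\omega_i)$, together with the bookkeeping relation $\widetilde F_i(1)=|F_i|+ (\text{correction terms from the }\ell=0\text{ pieces})$.

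Next I would handle the diagonal realization. Having replaced each $F_i$ by $\widetilde F_i$ (and $a_0$ by $|a_0|$, adjusting signs via the same device when $a_0<0$), the hypotheses of Theorem \ref{th4} are met provided the rank inequality $|a_0|+|F_1|+\cdots+|F_n|\ge \operatorname{rank}(A)$ holds — and here is where the hypothesis $Min(F_i)\ge 1$ enters, guaranteeing that every power of every $\omega_i$ genuinely occurs so that the Fillmore-type block decomposition used in Theorems \ref{th1} and \ref{th4} (a block $r_i\times r_i$ diagonal with prescribed trace, modulo strictly-lower-triangular junk) can be carried out with $1\le\deg \widetilde F_i\le\beta_i-1$. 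Then, exactly as in Example \ref{ex1} and Example \ref{ex1}'s non-integer variant, I would write the off-diagonal block matrix as a \emph{signed} sum of the basic matrices of Lemma \ref{lema4} and Lemma \ref{lema1}: on each block we peel off the scalar $\alpha_{ij}$ (or $-\alpha_{ij}$) in front of a $(\beta_i+1)$-potent matrix $\tfrac{1}{\alpha_{ij}}X_j^{(i)}$, so that negative coefficients simply contribute matrices subtracted rather than added — this is legitimate since ``linear combination'' in the statement allows arbitrary integer (indeed complex) coefficients. Counting the summands produced block-by-block gives the finite bound.

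For the converse, the necessity direction, I would argue as in the necessity half of Theorem \ref{th1}: if $A=\sum_{t} c_t E_t$ is a linear combination of $(\beta_i+1)$-potent matrices $E_t$, then by Lemma \ref{lemapoli} each $E_t$ has $tr(E_t)=G_t(\omega_{i(t)})$ and $\operatorname{rank}(E_t)=G_t(1)$ for a nonnegative-integer polynomial $G_t$, so $tr(A)=\sum_t c_tG_t(\omega_{i(t)})$; grouping by which root $\omega_i$ is used and reading off the coefficient of each power yields the $F_i$, and the module bound follows from $|a_0|+\sum_i|F_i|\le\sum_t|c_t|G_t(1)=\sum_t|c_t|\operatorname{rank}(E_t)\ge\operatorname{rank}\!\big(\sum_t c_tE_t\big)=\operatorname{rank}(A)$ — being slightly careful that the triangle-inequality bookkeeping for $|F_i|$ survives the regrouping, which it does because $|\alpha\pm\beta|\le|\alpha|+|\beta|$. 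The main obstacle I anticipate is precisely this last bookkeeping: keeping the sign-cancellation device and the ``$Min(F_i)\ge1$'' hypothesis consistent so that after rewriting each $F_i$ as $\widetilde F_i$ one still has $\deg\widetilde F_i\le\beta_i-1$ (no overflow past $\omega_i^{\beta_i-1}$, which is why one reduces mod $\beta_i$ rather than mod $\beta_i+1$) and still has enough rank ``budget'' — i.e.\ that the correction terms do not violate the inequality $\widetilde F_i(1)\ge r_i$; this is exactly the role of $Min(F_i)\ge1$, which ensures the slack is available.
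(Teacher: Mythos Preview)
Your proposal reaches the right destination but by a needlessly circuitous route, and the converse direction contains a genuine gap.

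\textbf{On the sufficiency direction.} The paper's argument is a one-liner: ``Follows immediately from Theorem \ref{th4} and the above examples.'' The point of Examples \ref{ex1} and its sequel is that once Fillmore's theorem puts the prescribed values $\alpha_{ij}\omega_i^{\,j}$ on the diagonal (with $\alpha_{ij}$ now allowed to be negative integers, or even non-integers), each column peels off as $\alpha_{ij}$ times a matrix of the shape in Lemma \ref{lema4}; a negative $\alpha_{ij}$ simply becomes a negative coefficient in the linear combination. You do state this idea (``negative coefficients simply contribute matrices subtracted rather than added''), but you bury it under an elaborate sign-absorption scheme via $-\omega^j=\sum_{\ell\neq j}\omega^\ell$. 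That detour is unnecessary here: it would convert the problem into one about \emph{sums} (Theorem \ref{th4} as stated), whereas the corollary only asks for \emph{linear combinations}, so the signed-column decomposition of the Examples already suffices. Your bookkeeping about $\widetilde F_i(1)$ versus $|F_i|$ and the ``slack'' from $Min(F_i)\ge 1$ is solving a harder problem than the one posed.

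\textbf{On the necessity direction.} Your chain
\[
|a_0|+\sum_i|F_i|\ \le\ \sum_t|c_t|\,G_t(1)\ =\ \sum_t|c_t|\operatorname{rank}(E_t)\ \ge\ \operatorname{rank}(A)
\]
does not yield $|a_0|+\sum_i|F_i|\ge\operatorname{rank}(A)$: both outer quantities are bounded \emph{above} by the middle one, which tells you nothing about their relative size. Moreover, $\sum_t|c_t|\operatorname{rank}(E_t)\ge\operatorname{rank}(A)$ is itself not a rank inequality unless you know $|c_t|\ge 1$; for arbitrary scalars $c_t$ it is false. (The paper does not supply a detailed converse argument either; it simply points back to Theorem \ref{th4}, whose necessity proof uses $\operatorname{rank}(\sum E_j)\le\sum\operatorname{rank}(E_j)$ without scalar weights.) If you want a clean converse here you need to either restrict ``linear combination'' to integer coefficients, or rethink the inequality entirely.
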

\begin{proof}
Follow immediately from Theorem \ref{th4} and the above examples.
\end{proof}
And so is possible enunciate similar results for the case of linear combination of matrices of  finite order (see Corollary \ref{corolfiniteorder}) and count the quantity necessary of $k$-potent matrices for obtain a linear combination in special cases for the rank of $A$ (see Proposition \ref{superpropo}). We leave this work for the reader.

\section{Sums of k-potent matrices in $\mathcal{M}_{\mathcal{C}f}(F)$}

In this section we present some results that generalized the paper of Slowik in \cite{SlowikIdemp}.  We write $e_{\infty}$  and $e_k$ for $\mathbb{N}\times\mathbb{N}$ and $k\times k$ identity matrices, respectively, and $e_{nm}$ for the $\mathbb{N}\times\mathbb{N}$ matrix with 1 in the position $(n,m)$ and 0 in every other position.
 
Let $\mathcal{M}_{\mathcal{C}f} (F)$ be the set of all column-finite $\mathbb{N} \times  \mathbb{N}$ matrices over a field $F$. Denote   $\mathcal{T}_{ \infty} (F)$  the subring of $\mathcal{M}_{\mathcal{C}f} (F)$ consisting of all upper triangular matrices and $\mathcal{L}T_{\mathcal{C}f} (F)$ the subring of all column-finite lower triangular matrices. Here $\mathcal{T}_n(F)$ and $\mathcal{L}T_n(F)$ will be used for the rings of all $n \times  n$ upper or lower triangular matrices respectively, whereas $\mathcal{D}_n(F)$ will denote the ring of all $n \times  n$ diagonal matrices. The full $n \times  n$ matrix ring will
be denoted by $\mathcal{M}_{n\times n}(F)$.

\subsection{Triangular matrices}

Consider $\omega \in F$ a kth root of unity, $w\neq 1$. Here we have the following lemma

\begin{lemma}\label{lema4.1} Let $F$ a field and let $t \in \mathcal{T}_{\infty}(F)$. If
$$t=\omega e_{\infty}+ \sum_{m-n\geq 1}t_{nm}e_{nm}$$
and $t_{n,n+1}\neq 0$ for all $n\in \mathbb{N}$, then $t$ is similar to
$$\omega e_{\infty}+\sum^{\infty}_{n=1}t_{n,n+1}e_{n,n+1}.$$
\end{lemma}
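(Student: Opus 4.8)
The plan is to strip the scalar part off $t$ and then conjugate away every entry lying above the first superdiagonal, constructing the conjugating matrix one column at a time. Concretely, in \textbf{Step 1} I would write $t = \omega e_{\infty} + N$, where $N = \sum_{m-n\ge 1} t_{nm} e_{nm}$ is strictly upper triangular and column-finite, with $N_{n,n+1} = t_{n,n+1}\neq 0$ for all $n$. Since $\omega e_{\infty}$ is central in $\mathcal{M}_{\mathcal{C}f}(F)$, any conjugation satisfies $u^{-1} t u = \omega e_{\infty} + u^{-1} N u$, so it suffices to find a column-finite invertible $u$ with $u^{-1} N u = N_0 := \sum_{n\ge 1} t_{n,n+1} e_{n,n+1}$; then $u^{-1} t u = \omega e_{\infty} + N_0$, and with $X = u^{-1}$ (again column-finite and nonsingular) one gets $Xt = (\omega e_{\infty}+N_0)X$, which is exactly what ``similar'' means in this paper.

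For \textbf{Step 2} I would view $N$ as a locally nilpotent operator on $V = F^{(\mathbb{N})}$ with basis $e_1, e_2, \dots$, noting that $N e_j$ has finite support and lies in $\langle e_1,\dots,e_{j-1}\rangle$. I would build vectors $f_1, f_2, \dots$ with $f_j\in e_j + \langle e_1,\dots,e_{j-1}\rangle$ and $N f_1 = 0$, $N f_j = t_{j-1,j} f_{j-1}$ for $j\ge 2$. Take $f_1 = e_1$. Given $f_1,\dots,f_n$, which then form a basis of $\langle e_1,\dots,e_n\rangle$, expand $N e_{n+1} = \sum_{j=1}^{n} d_j f_j$; because $f_n = e_n + (\text{lower terms})$ while $f_1,\dots,f_{n-1}\in\langle e_1,\dots,e_{n-1}\rangle$, the coefficient $d_n$ is forced to equal the $e_n$-coefficient of $N e_{n+1}$, namely $d_n = t_{n,n+1}$. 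I would then set
$$f_{n+1} = e_{n+1} + \sum_{i=2}^{n} c_i f_i, \qquad c_{i+1} = -\,\frac{d_i}{t_{i,i+1}}\quad (1\le i\le n-1),$$
which is legitimate precisely because every $t_{i,i+1}\neq 0$. A short reindexing using $N f_i = t_{i-1,i} f_{i-1}$ together with $d_n = t_{n,n+1}$ then yields $N f_{n+1} = t_{n,n+1} f_n$, with $f_{n+1}\in e_{n+1}+\langle e_1,\dots,e_n\rangle$, closing the induction.

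In \textbf{Step 3} I let $u$ be the matrix whose $j$-th column is $f_j$. By construction $u$ is upper triangular with $1$'s on the diagonal and with finite columns, so $u\in\mathcal{M}_{\mathcal{C}f}(F)$; writing $u = e_{\infty}+\nu$ with $\nu$ strictly upper triangular and locally nilpotent, the Neumann series $\sum_{k\ge 0}(-\nu)^k$ terminates on each column and supplies $u^{-1}\in\mathcal{M}_{\mathcal{C}f}(F)$. The relations $N f_j = t_{j-1,j} f_{j-1}$ say precisely $Nu = u N_0$, hence $u^{-1} N u = N_0$, and combined with Step 1 this completes the proof.

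The point I expect to require the most care is the internal consistency of the induction in Step 2: once the $c_i$ are used to clear the entries $(i,n+1)$ with $i\le n-1$, one must verify that \emph{no} further adjustment of the $e_n$-coefficient is needed, i.e.\ that the $f_n$-coefficient of $N e_{n+1}$ already equals $t_{n,n+1}$ — this is the mechanism that keeps the superdiagonal unchanged. The hypothesis $t_{n,n+1}\neq 0$ is used only in solving the clearing equations, and the column-finiteness and local-nilpotency bookkeeping is what guarantees that $u$ and $u^{-1}$ stay inside $\mathcal{M}_{\mathcal{C}f}(F)$.
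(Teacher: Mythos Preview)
Your argument is correct. The inductive construction in Step~2 is sound: the crucial observation that the $f_n$-coefficient $d_n$ of $Ne_{n+1}$ automatically equals $t_{n,n+1}$ is exactly what prevents the superdiagonal from being disturbed, and the clearing equations $c_{j+1}=-d_j/t_{j,j+1}$ are solvable precisely because of the hypothesis. The bookkeeping for $u$ and $u^{-1}$ staying in $\mathcal{M}_{\mathcal{C}f}(F)$ is also fine.

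However, your route is quite different from the paper's. The paper does not build the conjugating matrix at all; it simply rescales, setting $u=\frac{1}{\omega}t$, observes that $u$ has $1$'s on the diagonal and the same nonzero superdiagonal pattern, and then invokes Lemma~2.1 of Slowik \cite{SlowikIdemp} (the $\omega=1$ case) as a black box to conclude that $u$ is similar to $e_\infty+\sum_n \frac{1}{\omega}t_{n,n+1}e_{n,n+1}$; multiplying back by $\omega$ gives the claim. So the paper's proof is a two-line reduction to an existing result, whereas you have essentially reproved Slowik's lemma from scratch after peeling off the central scalar $\omega e_\infty$. Your version is self-contained and even works without assuming $\omega\neq 0$, while the paper's is much shorter but leans entirely on the external reference.
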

\begin{proof} Let $t=\omega e_{\infty}+ \sum_{m-n\geq 1}t_{nm}e_{nm}$. Define $u=\displaystyle\frac{1}{\omega}\cdot t$, in this case $u$ are in the conditions of Lemma  2.1 in \cite{SlowikIdemp},
so $u$ is similar to $$ e_{\infty}+\sum^{\infty}_{n=1}\frac{1}{\omega}\cdot t_{n,n+1}e_{n,n+1}.$$
Therefore, $t$ is similar to $$\omega e_{\infty}+\sum^{\infty}_{n=1}t_{n,n+1}e_{n,n+1},$$
what demonstrates the Lemma.
\end{proof}
Next, proof the following proposition

\begin{proposition}\label{prop1}Let $F$ be a field. If $t\in \mathcal{T}_{\infty}(F)$ satisfies the condition $t_{nn}=2\omega$ for all $n \in \mathbb{N}$, then $t$ is a sum of at most four $(k+1)$-potent matrices.
\end{proposition}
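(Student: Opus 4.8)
The plan is to write $t$ as a sum of two matrices, each of which is itself a sum of two $(k+1)$-potent matrices. Write $t = 2\omega e_{\infty} + U$, where $U = \sum_{m-n\geq 1} t_{nm}e_{nm}$ is the strictly upper triangular (and column-finite) part of $t$. First I would split $U = U' + U''$ into two strictly upper triangular column-finite matrices each of whose first–superdiagonal entries are all nonzero: when $|F|\geq 3$ one may pick $u'_{n,n+1}\in F\setminus\{0,t_{n,n+1}\}$ and set $u'_{nm}=t_{nm}$ for $m\geq n+2$ and $U''=U-U'$, so that the superdiagonal entries of $U''$ are $t_{n,n+1}-u'_{n,n+1}\neq 0$. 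Putting $t'=\omega e_{\infty}+U'$ and $t''=\omega e_{\infty}+U''$ we get $t=t'+t''$ (the diagonals add up since $2\omega=\omega+\omega$). It then suffices to show each of $t'$, $t''$ is a sum of two $(k+1)$-potent matrices, and since both have the same shape it is enough to treat a matrix $s_0=\omega e_{\infty}+V$ with $V$ strictly upper triangular, column-finite, and with all superdiagonal entries nonzero.

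Next I would bring $s_0$ to a normal form. By Lemma \ref{lema4.1}, $s_0$ is similar to $\omega e_{\infty}+\sum_{n\geq 1} v_{n,n+1}e_{n,n+1}$. Since all $v_{n,n+1}\neq 0$, conjugating by the invertible column-finite diagonal matrix $D=\operatorname{diag}(d_1,d_2,\dots)$ with $d_1=1$ and $d_{n+1}=d_n v_{n,n+1}^{-1}$ rescales the $(n,n+1)$ entry to $1$, so $s_0$ is similar to $s:=\omega e_{\infty}+\sum_{n\geq 1}e_{n,n+1}$. As $(k+1)$-potency is preserved by similarity, it remains to exhibit $s$ as a sum of two $(k+1)$-potent matrices.

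For this I would use an explicit block decomposition, offset by one coordinate. Set
$$P=\sum_{j\geq 1}\bigl(e_{2j-1,2j}+\omega\,e_{2j,2j}\bigr),\qquad Q=\omega\,e_{11}+\sum_{j\geq 1}\bigl(e_{2j,2j+1}+\omega\,e_{2j+1,2j+1}\bigr),$$
i.e. $P$ is block-diagonal with blocks $\left[\begin{smallmatrix}0&1\\0&\omega\end{smallmatrix}\right]$ along $\{1,2\},\{3,4\},\dots$, while $Q$ has a $1\times1$ block $[\omega]$ on $\{1\}$ followed by blocks $\left[\begin{smallmatrix}0&1\\0&\omega\end{smallmatrix}\right]$ along $\{2,3\},\{4,5\},\dots$; both lie in $\mathcal{M}_{\mathcal{C}f}(F)$ (at most two nonzero entries per column). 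An entrywise check gives $P+Q=\omega e_{\infty}+\sum_{n\geq 1}e_{n,n+1}=s$ — every diagonal entry comes out $\omega$, every first–superdiagonal entry comes out $1$, all others vanish. Each $2\times2$ block is of the form considered in Lemma \ref{lema4} (equivalently $\left[\begin{smallmatrix}0&1\\0&\omega\end{smallmatrix}\right]^{m}=\left[\begin{smallmatrix}0&\omega^{m-1}\\0&\omega^{m}\end{smallmatrix}\right]$, so $X^{k+1}=X$ because $\omega^{k}=1$), and likewise $[\omega]$ satisfies $\omega^{k+1}=\omega$; since $(A\oplus B)^{k+1}=A^{k+1}\oplus B^{k+1}$, both $P$ and $Q$ satisfy $X^{k+1}=X$ and are thus $(k+1)$-potent. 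Hence $s$, and therefore each of $t'$ and $t''$, is a sum of two $(k+1)$-potent matrices, so $t=t'+t''$ is a sum of at most four.

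The step that needs separate care — and the only real obstacle — is the splitting when $F=\mathbb{F}_{2}$, where $u'_{n,n+1}\notin\{0,t_{n,n+1}\}$ is impossible whenever $t_{n,n+1}=1$. This is precisely the degenerate small-field situation already encountered in \cite{SlowikIdemp}, and I would dispatch it by the same device used there (moving part of the coupling onto the second superdiagonal before invoking Lemma \ref{lema4.1}); the rest of the argument is unchanged.
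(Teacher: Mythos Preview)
Your argument is correct, and in fact your one declared worry is a phantom: the standing hypothesis in this section is that $\omega\in F$ is a $k$th root of unity with $\omega\neq 1$, which already forces $|F|\geq 3$. Hence the choice $u'_{n,n+1}\in F\setminus\{0,t_{n,n+1}\}$ is always available, and no separate $\mathbb{F}_2$ patch is needed. (The small-field obstruction is genuine in \cite{SlowikIdemp} because there $\omega=1$; here it simply does not arise.)

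Your route differs from the paper's. The paper makes an \emph{asymmetric} split $t=t_1+t_2$: the matrix $t_1$ carries all of the strictly upper-triangular part of $t$ with its first superdiagonal patched (each zero entry replaced by $\omega$) so that Lemma~\ref{lema4.1} applies to $t_1$ alone; after reduction to bidiagonal form, $t_1$ is written as $u+u'$ via the same even/odd $2\times 2$ block idea you use for $s$. The remainder $t_2=t-t_1$ then has diagonal $\omega$, vanishes beyond the first superdiagonal, and has superdiagonal entries lying in $\{0,-\omega\}$; it is decomposed as $v'+v''$ by a direct greedy inductive rule on the rows rather than by another appeal to Lemma~\ref{lema4.1}. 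By contrast, you make a \emph{symmetric} split $t=t'+t''$ in which both halves have everywhere nonzero first superdiagonal, reduce each via Lemma~\ref{lema4.1} plus a further diagonal conjugation to the single normal form $s=\omega e_\infty+\sum_{n\geq 1}e_{n,n+1}$, and then write $s=P+Q$. The paper's version has the virtue of working uniformly even when $\omega=1$ (so it specializes cleanly to the idempotent case), while yours is tidier in that both halves are handled by the same mechanism; your extra normalization to $s$ is pleasant but inessential, since the even/odd block decomposition already works for any bidiagonal matrix with diagonal $\omega$.
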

\begin{proof} Following the Slowik demonstration of Lemma  2.1 in \cite{SlowikIdemp}. First we define two matrices $t_1$ and $ t_2$ as follows:
$$
(t_1)_{nm}=\left\{\begin{array}{lcrcccr}
\omega & \text{if} & m=n, &&&&\\
t_{nm} & \text{if} & m-n>1, &&&&\\
t_{nm} & \text{if} & m-n=1, &\  and &t_{nm}\neq0,&& t_2=t-t_1\\
\omega & \text{if} & m-n=1, &\ and &t_{nm}=0,&&
\end{array}
\right.$$

One can see that $t_1$ fulfills the assumptions of Lemma \ref{lema4.1}. Hence, $t_1$ is similar to the matrix
$$\omega e_{\infty}+\sum^{\infty}_{n=1}(t_1)_{n,n+1}e_{n,n+1},$$
and we write this matrix as the sum $u + u'$, where
 $$u=\sum_1^{\infty} \left(\omega e_{2n-1,2n-1}+(t_1)_{2n-1,2n}e_{2n-1,2n}\right)\ \text{and}\ \ u'=\sum_1^{\infty}\left(\omega e_{2n,2n}+(t_1)_{2n,2n+1}e_{2n,2n+1}\right),$$
it is easy to check that $u$ and $u'$ are $(k+1)$-potent matrices.

Now we will write $t_2 = v'+v''$ where $v'$ is defined by the following inductive rule:
\begin{itemize}
    \item[(1)] $v'_{11}=(t_2)_{11},$ $v'_{12}=(t_{2})_{12}$;
    \item[(2)] if $ v'_{n,n+1} = 0$, then put we $v'_{n+1,n+1} = \omega$, $v'_{n+1,n+2} = (t_2)_{n+1,n+2}$;
    \item[(3)] if $v'_{n,n+1} \neq  0$, then we put $v'_{n+1,n+1} = 0,v'_{n+1,n+2} = 0$.
\end{itemize}

 This it, if $v'_{n,n+1} \not = 0$, then the entries from the next row 'go' to $v''$. If $v'_{n,n+1} = 0$, then the next row can 'stay' in $v'$.
 
 From the construction of $v'$ and $v''$ it follows that they are $(k+1)$-potent matrices. 
 
\end{proof}

\begin{corollary}\label{cor1} Let $F$ be a field and $k \in \mathbb{N}$ . If $t$ is either from $\mathcal{T}_{\infty}(F)$ or $\mathcal{L}T_k(F)$ and $t_{nn} = 2\omega$ for all $n$, $1 \leq n \leq k$, then $t$ is a sum of at most four $(k+1)$-potent matrices.
\end{corollary}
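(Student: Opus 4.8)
The statement breaks into two cases, and the first one is free: if $t\in\mathcal{T}_\infty(F)$ (so that the hypothesis forces $t_{nn}=2\omega$ for every $n\in\mathbb{N}$), then the conclusion is precisely Proposition~\ref{prop1}. So the only thing left to prove is the assertion for $t\in\mathcal{L}T_k(F)$, a genuinely $k\times k$ lower triangular matrix with constant diagonal $2\omega$.

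My plan for the lower triangular case is to transport it to the upper triangular case already handled. Since $t$ is a finite matrix, its transpose $t^{T}$ (equivalently $JtJ$, with $J$ the $k\times k$ anti-diagonal permutation matrix) lies in $\mathcal{T}_k(F)$ and has the same diagonal, hence $(t^{T})_{nn}=2\omega$ for $1\le n\le k$. Transposition preserves $(k+1)$-potency, because $(A^{T})^{j}=(A^{j})^{T}$ for all $j$, so $A^{k+1}=A$ and $A^{k}\neq A$ hold for $A$ iff they hold for $A^{T}$. Thus it suffices to write $t^{T}$ as a sum of at most four $(k+1)$-potent matrices inside $\mathcal{T}_k(F)$ and then transpose the whole decomposition back to $\mathcal{L}T_k(F)$.

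To decompose $t^{T}\in\mathcal{T}_k(F)$ I would rerun the proof of Proposition~\ref{prop1} with all index ranges truncated to $\{1,\dots,k\}$: split $t^{T}=t_1+t_2$ with $t_1$ having diagonal $\omega$ and nonzero superdiagonal, invoke the finite analogue of Lemma~\ref{lema4.1} (which follows from the finite version of Slowik's reduction lemma in \cite{SlowikIdemp} after the rescaling $u=\omega^{-1}t_1$, using $\omega\neq 0$) to put $t_1$ into pure superdiagonal form, decompose the latter as $u+u'$ along odd and even rows, and build $v',v''$ from $t_2$ by the same inductive rule, the recursion now simply stopping at row $k$. Each of $u,u',v',v''$ is then $(k+1)$-potent in $\mathcal{T}_k(F)$.

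The only point needing care is the behaviour of these inductive constructions at the last row: one must check that truncating at $n=k$ still yields matrices satisfying $X^{k+1}=X$ and $X^{k}\neq X$ rather than degenerating, and that the rescaling in the finite analogue of Lemma~\ref{lema4.1} really matches Slowik's hypotheses (it does: dividing the constant diagonals $2\omega$ and $\omega$ by $\omega$ gives $2$ and $1$). I do not anticipate a genuine obstacle here, since all the structural ideas are already present in Proposition~\ref{prop1}; the corollary is essentially a bookkeeping exercise together with the transpose trick.
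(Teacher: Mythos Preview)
Your proposal is correct. The paper offers no proof of this corollary at all: it simply states the result as an immediate consequence of Proposition~\ref{prop1} and moves on to use it in Proposition~\ref{prop2}. Your write-up is therefore not competing with an existing argument but filling in what the authors left implicit.

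Your route via transposition (or the equivalent anti-diagonal conjugation $JtJ$) to reduce the finite lower triangular case to a finite upper triangular one, followed by rerunning the Proposition~\ref{prop1} construction on a truncated index set, is exactly the natural way to make the corollary precise, and it works. Two minor remarks: first, for the purpose of the corollary you only need $X^{k+1}=X$, not the additional $X^{k}\neq X$---the paper's definition of $(k+1)$-potent is just $A^{k+1}=A$, so the boundary blocks (a trailing $1\times 1$ block $[\omega]$ or $[0]$ when the size is odd) cause no trouble. Second, the symbol $k$ in $\mathcal{L}T_k(F)$ is an unfortunate reuse of the letter already fixed for the potency exponent; the intended reading (clear from how the corollary is invoked in Proposition~\ref{prop2}) is that the matrix size is an arbitrary finite $m$, and you have interpreted this correctly.
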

 
 The above corollary will be useful in the proof of the below result.

 \begin{proposition}\label{prop2} If $F$ is a field and $t \in \mathcal{L}T_{Cf}(F)$ is such that $t_{nn} = 3\omega$ for all $n \in\mathbb{N}$, then $t$
can be written as a sum of at most six $(k+1)$-potent matrices.
 \end{proposition}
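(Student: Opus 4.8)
The plan is to peel the constant diagonal $3\omega$ into three pieces, each with constant diagonal $\omega$, and handle the pieces separately, exactly as the diagonal $3$ case of the idempotent statement is treated in \cite{SlowikIdemp}. First I would write $t = t_1 + t_2 + t_3$ with $t_1,t_2,t_3 \in \mathcal{L}T_{Cf}(F)$, each lower triangular and column-finite with $(t_i)_{nn} = \omega$ for all $n$; for instance one may keep the whole strictly lower triangular part of $t$ inside $t_1$ and set $t_2 = t_3 = \omega e_\infty$ (how the off-diagonal entries of $t$ are distributed among the three matrices turns out not to matter once the key lemma below is available). It then suffices to prove the following: every $s \in \mathcal{L}T_{Cf}(F)$ with $s_{nn} = \omega$ for all $n$ is a sum of at most two $(k+1)$-potent matrices. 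Granting this, $t = t_1 + t_2 + t_3$ is a sum of at most $2 + 2 + 2 = 6$ $(k+1)$-potent matrices, which is the assertion.

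For this key lemma I would imitate the construction of the matrices $v'$ and $v''$ in the proof of Proposition~\ref{prop1}, transported to the column-finite lower triangular setting. Write $s = v' + v''$ by an inductive ``staircase'' rule run along $n = 1,2,3,\dots$: at each stage exactly one of $v',v''$ is declared the owner of the diagonal place $(n,n)$ and receives the entry $\omega$ there, while the strictly-below-diagonal entries of column $n$ of $s$ are distributed between $v'$ and $v''$ so that no pivot column of either matrix meets a later pivot row of the same matrix, the owner being switched whenever the reach of the current pivot column forces it. Since $s$ is column-finite, each step involves only finitely many entries, the pivots march off to infinity and exhaust $\mathbb{N}$, and $v',v''$ stay in $\mathcal{L}T_{Cf}(F)$. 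By construction each of $v'$ and $v''$ is locally of the form described in Lemma~\ref{lema4}, so that $\frac{1}{\omega}v'$ and $\frac{1}{\omega}v''$ are idempotent; and since $\omega^k = 1$, the matrix $\omega E$ satisfies $(\omega E)^{k+1} = \omega^{k+1}E = \omega E$ for every idempotent $E$, so $v'$ and $v''$ are $(k+1)$-potent. Equivalently and more briefly: $\frac{1}{\omega}s$ is a lower triangular column-finite matrix with constant diagonal $1$, hence a sum of at most two idempotents $E_1 + E_2$ by the corresponding result of \cite{SlowikIdemp}, and then $s = \omega E_1 + \omega E_2$ with both summands $(k+1)$-potent.

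The main obstacle is making the staircase construction rigorous over $\mathbb{N}\times\mathbb{N}$: one must pin down the switching rule, check that it is always defined — in particular that the rows ``forbidden'' as pivots of $v'$ and those forbidden for $v''$ never together exhaust an initial segment, which is where one may be forced to split an individual column of $s$ between the two matrices rather than assign it wholesale — verify that $v'$ and $v''$ remain column-finite and lower triangular, and confirm the algebraic identity $(v')^2 = \omega\, v'$ (and likewise for $v''$). This is precisely the infinite, column-finite analogue of the verification already carried out for $v'$ and $v''$ in the proof of Proposition~\ref{prop1}, and of the analogous lemma of Slowik in \cite{SlowikIdemp}; the remaining steps (the diagonal splitting and the counting) are routine bookkeeping. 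If one is content to quote \cite{SlowikIdemp} for the decomposition of a diagonal-$1$ lower triangular column-finite matrix into two idempotents, the entire difficulty collapses to that citation together with the one-line observation that $\omega$ times an idempotent is $(k+1)$-potent.
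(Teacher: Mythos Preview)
Your reduction to the key lemma ``every $s\in\mathcal{L}T_{Cf}(F)$ with $s_{nn}=\omega$ is a sum of two $(k+1)$-potent matrices'' is where the argument breaks. The staircase rule you sketch does not produce $(k+1)$-potent pieces in general. If you try to assign whole columns of $s$ to $v'$ or $v''$ so that each piece is a ``disjoint union'' of the single-column matrices of Lemma~\ref{lema4}, then for $(v')^2=\omega v'$ you need that whenever columns $j_1<j_2$ both belong to $v'$ one has $s_{j_2,j_1}=0$. In graph language you are asking for a proper $2$-colouring of the graph on $\mathbb{N}$ with an edge $j\!-\!i$ whenever $i>j$ and $s_{ij}\neq 0$; already the $3\times 3$ matrix with $s_{21},s_{32},s_{31}$ all nonzero gives a triangle, so no such colouring exists. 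Your hedge (``split an individual column between the two matrices'') destroys the single-column form on which $(v')^2=\omega v'$ rests, and you give no replacement mechanism. The analogy with the $v',v''$ of Proposition~\ref{prop1} is misleading: there $t_2=t-t_1$ is by construction bidiagonal (only diagonal and first off-diagonal entries), so no such obstruction can occur.

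The fallback citation is also wrong: \cite{SlowikIdemp} does not prove that a diagonal-$1$ matrix in $\mathcal{L}T_{Cf}(F)$ is a sum of two idempotents. What Slowik proves (and what the present paper transports to the $(k+1)$-potent setting) is exactly the diagonal-$3$ statement, and the method is not your $3=1+1+1$ split. Instead one uses column-finiteness to build a nondecreasing ``staircase'' sequence $(l'_m)$ and peels off a \emph{block-diagonal} matrix $u$ with finite blocks and constant diagonal $2\omega$; each finite block is then a sum of at most four $(k+1)$-potents by Corollary~\ref{cor1}. The remainder $v=t-u$ has diagonal $\omega$ but a very special shape: only block-subdiagonal entries survive, so $v$ is an infinite direct sum of two-block pieces and splits as $v'+v''$ by parity, each summand $(k+1)$-potent. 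That gives $4+2=6$. The block-diagonal extraction is the missing idea in your proposal; without it there is no reason the diagonal-$\omega$ piece admits a two-term decomposition.
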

 
 \begin{proof}
 We apply the method used in the proof of Proposition 2.2 from \cite{SlowikIdemp}. First we construct the sequence $(l_m)_{m\in\mathbb{N}}$ as follows:
 $$l_m = \max \{ i: t_{im} \neq 0\}.
 $$
Now we define another sequence $(l'_m)_{m\in \mathbb{N}}$ as follows:
$$
l'_m=\left\{\begin{array}{ll}
l_m & \ if \ for \ all \ j<m\  we\ have\ l_j\leq l_m,\\
\max \{ l_j : j < m\ \wedge\ l_j > l_m\} & \ otherwise
\end{array}
\right.
$$

The sequence $(l'_m)_{m\in \mathbb{N}}$ is nondecreasing. Observed that for all $m \in \mathbb{N}$ and all $i > l'_m$ we
have $t_{im} = 0$. Thus, we can say that $t$ has staircase structure and that the stairs are determined by
the sequence $(l'_m)$. 

Now we will write $t=u+v$ where the matrix $u$ is constructed as follows.
For all $i \in \mathbb{N}$ we put $u_{ii} = 2\omega$. Now let $m$ be equal to $1$ and let $n$ be equal to $l'_1$. For all $i, j$ such
that $m \leq j < i \leq n$ we put $u_{ij} = t_{ij}$.

Let now $m$ be equal to the preceding $n$ increased by $1$ and let the new $n$ be equal to $l'_m$. Then for all $i, j, m \leq i, j \leq n$ we put $u_{ij} = t_{ij}$ .

Proceeding the same way we obtain a block diagonal matrix $u$ and consider $v = t - u$. By Corollary \ref{cor1} each block of $ u$ is a sum of at most four $(k+1)$-potent matrices. From the construction of $v$ it follows that

$$v=\left[\begin{array}{cccccc}
\omega & & & & & \\
v_1 &\omega&&&&\\
0&v_2&\omega&&&\\
0&0&v_3&\omega&&\\
0&0&0&v_4&\omega&\\
 \vdots&\vdots&&&\ddots&\ddots
\end{array}
\right] 
$$
and
$$
v'=\left[\begin{array}{cccccc}
\omega & & & & & \\
v_1 &0&&&&\\
0&0&\omega&&&\\
0&0&v_3&0&&\\
0&0&0&0&\omega&\\
 \vdots&\vdots&&&\ddots&\ddots
\end{array}
\right] \ \ \ \ \
v'' = v - v'=\left[\begin{array}{cccccc}
0 & & & & & \\
 0&\omega&&&&\\
0&v_2&0&&&\\
0&0&0&\omega&&\\
0&0&0&v_4&0&\\
 \vdots&\vdots&&&\ddots&\ddots
\end{array}
\right].
$$
It is easy to check that $v', v''$ are $(k+1)$-potent matrices.

 \end{proof}



\subsection{Diagonal Matrices}
For $\mathcal{D}_{\infty}(F)$ we have the following proposition:

\begin{proposition}\label{prop3}
Let $F$ any field. Any $D\in \mathcal{D}_{\infty}(F)$ is the sum of four $(k+1)$-potent matrices from $\mathcal{M}_{\mathcal{C}f}(F).$
\end{proposition}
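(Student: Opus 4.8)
The plan is to reduce an arbitrary diagonal matrix $D = \sum_{n} d_n e_{nn}$ to the situation already handled by the triangular results of the previous subsection, in particular Proposition~\ref{prop1} (a $\mathcal{T}_\infty(F)$ matrix with all diagonal entries $2\omega$ is a sum of at most four $(k+1)$-potent matrices). The first step is to split the diagonal pairwise: write $D = D' + D''$ where $D'$ collects the odd-indexed diagonal entries and $D''$ the even-indexed ones, so each of $D', D''$ is a direct sum of $2\times 2$ blocks (padded with the relevant single entries). On each $2\times 2$ block $\operatorname{diag}(d_{2n-1}, d_{2n})$ I would like to use Lemma~\ref{lema2}, which says precisely that $\operatorname{diag}(x,\,2\omega - x)$ is a sum of two $(k+1)$-potent matrices; the obstruction is that a given block $\operatorname{diag}(a,b)$ need not have $a + b = 2\omega$.

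To fix the trace mismatch I would introduce a correction term. The key observation is that Lemma~\ref{lema2} applies to $\operatorname{diag}(a,\,2\omega-a)$ for \emph{any} $a\in F$, so I can freely choose how to "balance" a pair: given $\operatorname{diag}(a,b)$, I split it as $\operatorname{diag}(a,\,2\omega-a) + \operatorname{diag}(0,\,b-2\omega+a)$, trading one $2\times2$ block that is a sum of two $(k+1)$-potents against a new diagonal matrix whose first coordinate is $0$. Doing this uniformly across all the $2\times 2$ blocks of $D'$ (and of $D''$) absorbs $D'$ into two $(k+1)$-potent matrices plus a residual diagonal matrix $E'$ all of whose odd-indexed entries vanish; similarly $D''$ gives two $(k+1)$-potents plus a residual $E''$. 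Now $E' + E''$ is again diagonal but with half its entries zero, and more importantly I can re-index so that the surviving entries are again paired up and shifted to sit on a superdiagonal; alternatively — and this is cleaner — I would at this point invoke Proposition~\ref{prop1}: after a further pairwise split I can arrange each residual block to have the form $\operatorname{diag}(c, 2\omega)$ or be part of an upper-triangular matrix with constant diagonal $2\omega$, which is a sum of at most four $(k+1)$-potents. Tallying: two from $D'$, two from $D''$, which already gives four if the residuals can be made to vanish; the heart of the argument is choosing the balancing constants $a$ so that the residuals $E', E''$ themselves telescope to zero (or to something already counted), not merely to a new diagonal matrix.

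Concretely, the honest route is: set it up so that after the first pairwise split and the balancing trick, $E'$ and $E''$ together form a \emph{single} matrix that again satisfies the hypothesis "constant diagonal $2\omega$" of Proposition~\ref{prop1} after conjugating by a permutation — but that would cost four more, giving eight, too many. So instead I would be more careful: split $D$ not into two halves but exploit that two applications of Lemma~\ref{lema2}-type moves suffice to realize \emph{any} diagonal matrix as a sum of two "$\operatorname{diag}(x, 2\omega-x)$-type" pieces plus one leftover diagonal matrix with all entries equal to a fixed scalar multiple of $\omega$, and that last matrix is handled by a scalar argument (cf. the Remark on $\alpha I_n$) or folded into one more $(k+1)$-potent summand. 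The bookkeeping to land on exactly four — rather than five or six — is the main obstacle: it requires choosing the pairing and the balancing scalars so that at most two genuinely "new" $(k+1)$-potent matrices are spent on each of the two interleaved subsequences, with the residual being absorbable for free. I expect the cleanest writeup mirrors Slowik's proof of the diagonal case in \cite{SlowikIdemp} with $2\omega$ in place of $2$ and Lemma~\ref{lema2} in place of the idempotent identity, and the verification that each explicitly written summand satisfies $X^{k+1} = X$ but $X^k \neq X$ is then routine via Lemma~\ref{lema1}.
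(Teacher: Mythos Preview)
Your proposal identifies the right ingredient --- Lemma~\ref{lema2}, which handles $\operatorname{diag}(x,\,2\omega-x)$ as a sum of two $(k+1)$-potents --- but it does not close, and the gap is exactly the bookkeeping you flag at the end. Splitting $D$ into odd- and even-indexed parts and then balancing each $2\times2$ block as $\operatorname{diag}(a,\,2\omega-a)+\operatorname{diag}(0,\,b-2\omega+a)$ produces genuine residuals $E',E''$ that are themselves arbitrary diagonal matrices (supported on half the indices); nothing in your construction forces them to vanish or to be absorbable for free, so you are left recursing on a problem of the same shape. Your fallback to Proposition~\ref{prop1} costs four more summands, and your final paragraph effectively concedes that you do not see how to reach four.

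The missing idea is a \emph{telescoping} definition of two diagonal matrices $X$ and $Y$ with $D=X+Y$, built simultaneously and with \emph{offset} $2\times2$ blockings: $X$ is blocked along positions $(1,2),(3,4),\ldots$ and $Y$ along positions $(2,3),(4,5),\ldots$ (with $y_{11}=0$). One sets $x_{11}=d_{11}$, then alternately forces the second entry of the current block to be $2\omega$ minus the first (so that Lemma~\ref{lema2} applies), and defines the next entry of the \emph{other} matrix as whatever is needed to make the column sum equal $d_{nn}$. Concretely: $x_{22}=2\omega-x_{11}$, $y_{22}=d_{22}-x_{22}$, $y_{33}=2\omega-y_{22}$, $x_{33}=d_{33}-y_{33}$, and so on. By construction every $2\times2$ block of $X$ and of $Y$ has the form $\operatorname{diag}(x,\,2\omega-x)$, so each of $X,Y$ is a sum of two $(k+1)$-potent matrices, and there is no residual at all. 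This is precisely Slowik's argument with $2\omega$ replacing $2$; your instinct in the last sentence was correct, but the actual mechanism --- the staggered pairing that makes the ``residual'' from one block become the starting entry of the next block in the other matrix --- is what you were missing.
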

\begin{proof}
The proof follows from Slowik in \cite{SlowikIdemp}. For $D=diag(d_{11},d_{22},d_{33},\cdots )$ define the matrices $X,Y\in \mathcal{D}_{\infty}(F)$ inductively as

$$\begin{array}{ll}
x_{11}=d_{11}, & y_{11}=0, \\
x_{22}=2\omega-x_{11}, & y_{22}=d_{22}-x_{22}, \\
y_{33}=2\omega-y_{22}, & x_{33}=d_{33}-y_{33}, \\
x_{44}=2\omega-x_{33}, & y_{44}=d_{44}-x_{44}, \\
\vdots & \vdots
\end{array}$$
Then, $X$ and $Y$ are in the form

$$X=\left[\begin{array}{cc} x_{11} & 0 \\ 0 & 2\omega-x_{11}\end{array}\right] \oplus \left[\begin{array}{cc} x_{33} & 0 \\ 0 & 2\omega-x_{33}\end{array}\right] \oplus \cdots 
$$ 

$$Y=[0]\oplus \left[\begin{array}{cc} y_{22} & 0 \\ 0 & 2\omega-y_{22}\end{array}\right] \oplus \left[\begin{array}{cc} y_{44} & 0 \\ 0 & 2\omega-y_{44}\end{array}\right] \oplus \cdots 
$$
and, by the Lemma \ref{lema2}, both $X$ and $Y$ are the sums of $(k+1)$-potent matrices.\end{proof}

Now, we can prove our following result.
\begin{proof}[Proof of Theorem \ref{th2}]
Define $t_1, t_2$ and $d$ as follows:
$$
(t_1)_{ij}=\left\{\begin{array}{lcl} 
2\omega & \text{if} & j=i\\ 
a_{ij} & \text{if} & i<j \\
0 &\text{if}&i>j
\end{array}
\right.
\ 
(t_2)_{ij}= \left\{\begin{array}{lcl} 
3\omega & \text{if} & j=i\\ 
a_{ij} & \text{if} & i>j \\
0 &\text{if}&i<j
\end{array}
\right.
\ 
d_{ij}= \left\{\begin{array}{lcl} 
a_{ij} -5\omega & \text{if} & j=i\\ 
0 & \text{if} & i\neq j
\end{array}
\right.$$

Clearly $A = t_1 + t_2 + d$. From Propositions \ref{prop1} and \ref{prop2} we know that $t_1$ and $t_2 $ are sums of at most
four and six $(k+1)$-potent matrices, respectively. By Proposition \ref{prop3} the matrix $d$ is a sum of four $(k+1)$-potent matrices. This proves the claim.
\end{proof}

\end{document}